\newtheorem{theorem}{Theorem}[section]
\newtheorem{proposition}[theorem]{Proposition}
\newtheorem{example}[theorem]{Example}
\newtheorem{conjecture}{Conjecture}[section]
\newtheorem{cor}[theorem]{Corollary}
\newtheorem{lemma}[theorem]{Lemma}
\theoremstyle{definition}
\newtheorem{definition}[theorem]{Definition}
\newcommand{\bx}{\textbf{x}}
\newcommand{\CC}{{\mathbb C}}
\newcommand{\RR}{{\mathbb R}}
\newcommand{\ZZ}{{\mathbb Z}}
\newcommand{\PP}{{\mathbb P}}
\newcommand{\OO}{{\mathcal O}}
\newcommand{\SOS}{{\mathrm{SOS}_k}}
\newcommand{\Sym}{\operatorname{Sym}}
\newcommand{\rk}{\operatorname{rk}}
\newcommand{\im}{\operatorname{Im}}
\newcommand{\codim}{\operatorname{codim}}
\DeclareMathOperator{\Span}{span}
\DeclarePairedDelimiter{\norm}{\|}{\|}
\title{On the degree of varieties of sum of squares}
\author[Ferguson]{Andrew Ferguson}
\author[Ottaviani]{Giorgio Ottaviani}
\author[Safey El Din]{Mohab Safey El Din}
\author[Teixeira Turatti]{Ettore Teixeira Turatti}
\address{Sorbonne Universit\'e, LIP6, 4 place Jussieu, 75252, Paris, CEDEX 05,
France}\email{andrew.ferguson@lip6.fr}
\address{Universit\`{a} di Firenze, Dipartimento di Matematica e Informatica, Viale Morgagni 67/A, 50134 Firenze, Italy}\email{giorgio.ottaviani@unifi.it}
\address{Sorbonne Universit\'e, LIP6, 4 place Jussieu, 75252, Paris, CEDEX 05,
France}\email{mohab.safey@lip6.fr}
\address{Universit\`{a} di Firenze, Dipartimento di Matematica e Informatica, Viale Morgagni 67/A, 50134 Firenze, Italy}\email{ettore.teixeiraturatti@unifi.it}
    \keywords{
        Degree, Sum of squares
}
        \subjclass[2020]{ 14N07, 14N05}
\begin{document}
\maketitle

     \begin{abstract}
        We study the problem of how many different sum of squares decompositions a general polynomial $f$ with SOS-rank $k$ admits. We show that there is a link between the variety $\mathrm{SOS}_k(f)$ of all SOS-decompositions of $f$ and the orthogonal group $\mathrm{O}(k)$. We exploit this connection to obtain the dimension of $\mathrm{SOS}_k(f)$ and show that its degree is bounded from below by the degree of $\mathrm{O}(k)$. In particular, for $k=2$ we show that $\mathrm{SOS}_2(f)$ is isomorphic to $\mathrm{O}(2)$ and hence the degree bound becomes an equality. Moreover, we compute the dimension of the space of polynomials of SOS-rank $k$ and obtain the degree in the special case $k=2$.
    \end{abstract}

\section{Introduction}\label{sec:intro}
\paragraph{\bf Motivation}  Let $V$ be a complex vector space of dimension $n+1$ with basis $\{x_0 \dots, x_n\}$ and let $d\geq 0$ be an integer. Let $f \in \CC[x_0,\dots,x_n]$ be a homogeneous polynomial of degree $2d$, that is $f \in \Sym^{2d}V$. A starting case, when $f$ is real, is the problem of computing the global infimum of $f$, $f^* = \inf_{z \in \RR^n} f(z)$. Polynomial optimisation problems appear frequently in practice in many different fields, including areas of engineering and social science such as computer vision~\cite{probst2019convex,Aholt_2013}, control theory~\cite{henrion2005,henrion2003opt} and optimal design~\cite{castro2017approximate}. However, even for $\deg f\geq 4$ this is an NP-hard problem~\cite{Murty1987SomeNP}. As such, many methods
have been developed to approximate $f^*$. A popular method is to relax the optimisation problem: 
\begin{align*}
    \max_{\lambda \in \RR} \lambda \text{ s.t. } & f - \lambda = \sum_{i=1}^k g_i^2, \\
    & g_i \in \Sym^d V.
\end{align*}

Clearly, being a sum of squares implies non-negativity. It is well-known that these notions are equivalent {in two homogeneous variables}. However, due to the counter example by Motzkin this is not true in general~\cite{Motzkin}. 

In~\cite{lasserre0}, using the duality between moments and sums of squares, Lasserre constructed a hierarchy of semi-definite programs whose solutions converge to the true infimum $f^*$. However, in general, the decompositions obtained from semi-definite programming are \emph{approximate} certificates of non-negativity. In recent years there has been an increased study on computing \emph{exact} certificates~\cite{peyrl08,magron2018}. Hence, one wants to understand the algebraic structure of SOS decompositions and the related semi-definite programs.

\paragraph{\bf Prior works}
Following the classical works of Sylvester~\cite{sylvester51}, the study of so-called Waring decompositions, decompositions of homogeneous polynomials by powers of linear forms, is an active area of research.
{In \cite{FOS} it was proved that any general $f\in\Sym^{2d}V$ is a sum of at most $2^n$ squares. For fixed $n$, this bound is sharp for all sufficiently large $d$}. 
%In the special case of SOS decompositions, 
The authors of~\cite{lundqvist2019generic} investigate the minimal numbers of squares in a decomposition of a generic polynomial in two variables. Then, in~\cite{froberg2018algebraic}, the authors give a conjecture on the generic SOS-rank of polynomials, see Definition \ref{def:sosk}, in terms of number of variables and degree. On the other hand, in this paper we will study generic polynomials of a given SOS-rank.

%In~\cite{nie2010algebraic}, Nie, Ranestad and Sturmfels study the algebraic degree of semi-definite programming. Specifically, they compute the degree of a solution to a semi-definite program. By duality, this result is then translated into the language of SOS decompositions. However, the formulae they provide requires that the parameters of the program, which in terms of the SOS formulation are the number of variables, $n$, the SOS-rank, $k$, and the degree of the polynomial, $d$, satisfy the following equation: 
%\begin{equation}\label{eq: NRS}
%       \binom{n+2d}{2d} = \binom{\binom{n+d}{d} - k + 1}{2}.
%\end{equation}
%We note that this equality is rarely satisfied outside of a few special cases.
%\red{satisfy a certain formula. Giorgio: to be clarified} 

%In this paper, one aim is to extend this result by analysing the SOS decompositions directly from an algebraic geometry point of view. Another aim is to better understand the structure of the SOS decompositions of a given polynomial.

In this paper, one aim is to analyse the degree of SOS decompositions directly from an algebraic geometry point of view. Another aim is to better understand the structure of the SOS decompositions of a given polynomial.

\paragraph{\bf Main results} We consider SOS decompositions of polynomials of degree $2d$. 
    \begin{definition}\label{def: genrank}
	    Let $f\in\Sym^{2d}V$. The polynomial $f$ has SOS-rank $k$ if $k$ is the minimum number such that there exist $f_i \in \Sym^{d}V$ such that
	    $$
	    f=\sum_{i=1}^r f_i^2.
	    $$
	    
	\end{definition}
	In this paper, we define and study two varieties related to exact SOS decompositions. The first is defined by all polynomials of rank less than or equal to $k$, with a general point $f\in \mathrm{SOS}_k$ being a polynomial of rank $k$.
	\begin{definition}\label{def:sosk}
		Let $\mathrm{SOS}_k$ be the subvariety in $\Sym^{2d}V$ obtained from the Zariski closure of the set of all SOS-rank $k$ polynomials.
		$$
		\mathrm{SOS}_k=\overline{\{f_1^2+\dots+f_k^2\ |\ f_i\in \Sym^dV  \}}.
		$$
		
		The generic SOS-rank is the smallest number $k$ such that $\mathrm{SOS}_k$ covers the ambient space.
	\end{definition}
	
	Another notion that can be explored is instead of analysing all polynomials of a given rank, one can seek to understand all the different decompositions of the general polynomial $f$.
	
	\begin{definition}\label{def:soskf}
	    Let $f\in \mathrm{SOS}_k$ be a generic polynomial. We define the variety of all the SOS decompositions of $f$ as $$
	    \mathrm{SOS}_k(f)=\{(f_1,\dots,f_k)\in{\prod_{i=1}^k \Sym^dV|\sum_{i=1}^kf_i^2=f}\}.
	    $$
	\end{definition}
	While we investigate the $\SOS(f)$ variety for all ranks $k$, in particular we give a complete description of the $k=2$ case.
\begin{restatable}{theorem}{sostwof}\label{thm:sos2f}
    Let $f\in\mathrm{SOS}_2\subset\mathrm{Sym}^{2d}V$, $\dim V=n+1>2$, be a generic polynomial that is the sum of two squares. Then, $\mathrm{SOS}_2(f)$ has two irreducible components isomorphic to $\mathrm{SO}(2)$.
\end{restatable}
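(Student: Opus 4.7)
The plan is to exploit the complex factorization $f = f_1^2 + f_2^2 = (f_1 + if_2)(f_1 - if_2)$ in the UFD $\CC[x_0,\ldots,x_n]$ and use unique factorization to classify all SOS decompositions of $f$.

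First I observe that $\mathrm{O}(2,\CC)$ always acts on $\mathrm{SOS}_2(f)$: if $M \in \mathrm{O}(2,\CC)$ and $(f_1,f_2) \in \mathrm{SOS}_2(f)$, then $M \cdot (f_1,f_2)^T$ is still a decomposition of $f$, since $\mathrm{O}(2,\CC)$ preserves the quadratic form $y_1^2 + y_2^2$. Fixing one decomposition $(f_1,f_2)$ gives an injective morphism $\mathrm{O}(2,\CC) \hookrightarrow \mathrm{SOS}_2(f)$. This is the ``easy'' inclusion, and it exhibits (at least) two irreducible subsets of $\mathrm{SOS}_2(f)$ isomorphic to $\mathrm{SO}(2,\CC)$, namely the cosets of determinant $+1$ and $-1$.

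The key step is the reverse inclusion: for generic $f$, every decomposition comes from the action of $\mathrm{O}(2,\CC)$ on $(f_1,f_2)$. Here I would prove that for a generic pair $(f_1,f_2) \in (\Sym^d V)^2$ with $\dim V = n+1 > 2$, the polynomials $f_1 + i f_2$ and $f_1 - i f_2$ are irreducible in $\CC[x_0,\ldots,x_n]$. This reduces to the classical fact that a generic homogeneous form of degree $d \geq 2$ in at least three variables is irreducible (the case $d=1$ is trivial, and one may also note that $f_1 \pm i f_2$ is itself a generic form of degree $d$). Given any other decomposition $f = g_1^2 + g_2^2$, unique factorization in $\CC[x_0,\ldots,x_n]$ forces $\{g_1 + i g_2,\; g_1 - i g_2\} = \{c(f_1 + i f_2),\; c^{-1}(f_1 - i f_2)\}$ for some $c \in \CC^{*}$; solving for $(g_1,g_2)$ explicitly in terms of $(f_1,f_2)$ produces an element of $\mathrm{O}(2,\CC)$, the two possible pairings corresponding exactly to $\det = +1$ and $\det = -1$.

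Putting these together shows $\mathrm{SOS}_2(f) \cong \mathrm{O}(2,\CC)$, which has precisely two irreducible components, both isomorphic to $\mathrm{SO}(2,\CC)$ (any coset of the identity component is a translate, hence isomorphic as a variety). The main obstacle will be the irreducibility of $f_1 \pm i f_2$, which is exactly where the hypothesis $n+1 > 2$ enters: in two variables these factor completely into linear forms, producing far more decompositions and breaking the correspondence with $\mathrm{O}(2,\CC)$. A mild additional care is needed to verify that the map $(f_1,f_2) \mapsto f_1^2 + f_2^2$ is dominant onto $\mathrm{SOS}_2$, so that ``generic $f$'' and ``generic $(f_1,f_2)$'' give the same open condition.
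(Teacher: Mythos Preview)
Your proposal is correct and follows essentially the same route as the paper: factor $f=(f_1+if_2)(f_1-if_2)$, use unique factorization in $\CC[x_0,\ldots,x_n]$ to match any other decomposition to this one up to scalars and a swap, and read off the resulting $\mathrm{O}(2)$. The paper's proof is slightly terser and even a bit imprecise (it asserts that ``$f$ general is smooth and by consequence irreducible,'' though $f$ itself visibly factors; what is really needed, as you state, is the irreducibility of $f_1\pm if_2$), and your extra remarks about $n+1>2$ being exactly the irreducibility hypothesis and about passing from generic $(f_1,f_2)$ to generic $f$ via dominance are welcome clarifications.
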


{Since $\mathrm{SO}(k)$ acts on any decomposition using $k$ squares, we have the inequality $$\dim\mathrm{SOS}_k(f)\ge\dim\mathrm{SO}(k)=\binom{k}{2}.$$
In Corollary \ref{cor:okcomponent} we prove a statement which implies the following result.}

\begin{restatable}{theorem}{dimsoskf}\label{thm:dimsoskf}
    {Let $f\in\mathrm{SOS}_k$ be generic with $k \le n$. Then,
    $$\dim\mathrm{SOS}_k(f)=\binom{k}{2}.$$}
\end{restatable}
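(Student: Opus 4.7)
The plan is to compute $\dim\mathrm{SOS}_k(f)$ as the generic fibre dimension of the parametrization
$$\Phi:(\Sym^d V)^k\longrightarrow\Sym^{2d}V,\qquad(f_1,\ldots,f_k)\longmapsto\sum_{i=1}^k f_i^2,$$
whose image closure is $\mathrm{SOS}_k$ by construction and whose fibre over $f$ is precisely $\mathrm{SOS}_k(f)$. Since the source is a smooth irreducible variety, the dimension of a generic fibre coincides with $\dim\ker d\Phi$ at a generic source point.

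The differential at $(f_1,\ldots,f_k)$ is $(h_1,\ldots,h_k)\mapsto 2\sum_i f_i h_i$, so its image is the degree-$2d$ piece of the ideal $I=(f_1,\ldots,f_k)\subset\CC[x_0,\ldots,x_n]$, and its kernel is the degree-$2d$ piece of the first syzygy module of $I$. To evaluate the latter I would pick a decomposition $(f_1,\ldots,f_k)$ whose entries form a regular sequence.

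For $k\le n+1$ the regular-sequence locus is Zariski open and dense in $(\Sym^d V)^k$; as $\Phi$ is dominant onto $\mathrm{SOS}_k$, the image of this locus is still dense in $\mathrm{SOS}_k$, so a generic $f\in\mathrm{SOS}_k$ admits such a decomposition (the hypothesis $k\le n$ sits comfortably inside this range). For a regular sequence of $k$ forms of degree $d$, the Koszul complex is exact, hence the first syzygy module is generated by the $\binom{k}{2}$ Koszul relations $f_j e_i - f_i e_j$, $i<j$. Since each Koszul generator already has degree $2d$, in degree exactly $2d$ they can only be combined by constant scalars, so the syzygy space in that degree is $\binom{k}{2}$-dimensional. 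Consequently $\dim\ker d\Phi=\binom{k}{2}$, which forces $\dim\mathrm{SOS}_k(f)=\binom{k}{2}$ and, as a by-product, $\dim\mathrm{SOS}_k=k\binom{n+d}{d}-\binom{k}{2}$. The $\mathrm{O}(k)$-orbit bound already noted in the excerpt recovers the same number and provides a consistency check.

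The main obstacle is the genericity input: one has to confirm that a regular-sequence decomposition really is reachable for a generic $f\in\mathrm{SOS}_k$, and that the fibre dimension is correctly captured by the kernel of $d\Phi$ at a single such generic decomposition, rather than being inflated by a non-reduced scheme structure. Both points reduce to standard openness and generic-smoothness statements in characteristic zero; the Koszul calculation itself is routine.
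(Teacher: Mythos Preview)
Your argument is correct and lands on the same computation as the paper: identify $\dim\mathrm{SOS}_k(f)$ with the dimension of the degree-$2d$ syzygy space of a generic decomposition $(f_1,\ldots,f_k)$, and show that this space is spanned by the $\binom{k}{2}$ Koszul relations. The paper reaches the Koszul conclusion by first checking it explicitly at the special point $(x_0^d,\ldots,x_{k-1}^d)$ and then invoking upper semi-continuity of Betti numbers (via $\mathrm{Tor}$) to propagate to a generic decomposition; from there it concludes that each component of $\mathrm{SOS}_k(f)$ is a copy of $\mathrm{O}(k)$ (Corollary~\ref{cor:okcomponent}), which in particular gives the dimension. Your route is slightly more direct: you observe that for $k\le n$ a generic $k$-tuple in $(\Sym^dV)^k$ is already a regular sequence, so exactness of the Koszul complex immediately forces the first syzygies to be Koszul, with no need for the specialisation-plus-semi-continuity step. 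Both arguments buy the same thing; yours trades the cohomological semi-continuity input for the standard openness of the regular-sequence locus, which is arguably more elementary, while the paper's version yields the stronger structural statement $\mathrm{SOS}_k(f)\cong\mathrm{O}(k)^p$ along the way.
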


 By analysing the general polynomial in $\mathrm{SOS}_2$, we prove a formula for the degree of this variety.

\begin{restatable}{theorem}{degSOSk}\label{thm:degSOSk}
    Let $N=\dim \Sym^dV=\binom{n+d}{d}$. The degrees of the varieties of squares and of sum of two squares in $\PP(\Sym^{2d}V)$ are given by $$\deg(\mathrm{SOS}_1)=2^{N-1},\ \deg(\mathrm{SOS}_2)=\prod_{i=0}^{N-3}\frac{\binom{N+i}{N-2-i}}{\binom{2i+1}{i}}.$$
\end{restatable}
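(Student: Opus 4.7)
For $\mathrm{SOS}_1$ the approach is direct. I would identify $\mathrm{SOS}_1$ with the image of the morphism $\sigma\colon\PP(\Sym^dV)\to\PP(\Sym^{2d}V)$, $[g]\mapsto[g^2]$, and observe that $\sigma$ is injective because $g^2=\lambda h^2$ forces $g=\pm\sqrt{\lambda}\,h$ and hence $[g]=[h]$ projectively. Since the components of $\sigma$ are quadratic forms in the coefficients of $g$, the pullback of the hyperplane class is $\sigma^{*}\OO_{\PP(\Sym^{2d}V)}(1)\cong\OO_{\PP^{N-1}}(2)$, and the projection formula yields
\[
\deg(\mathrm{SOS}_1)=\int_{\PP^{N-1}}(2H)^{N-1}=2^{N-1}.
\]

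For $\mathrm{SOS}_2$ I would exploit the complex factorization $f_1^2+f_2^2=(f_1+if_2)(f_1-if_2)$ to rewrite
\[
\mathrm{SOS}_2=\overline{\{gh:g,h\in\Sym^dV\}}\subset\PP(\Sym^{2d}V)
\]
as the image of the multiplication morphism $\mu\colon\PP(\Sym^dV)\times\PP(\Sym^dV)\to\PP(\Sym^{2d}V)$, $(g,h)\mapsto gh$. Being bilinear, $\mu$ satisfies $\mu^{*}H=H_1+H_2$, where $H_i$ is the hyperplane class pulled back from the $i$-th factor. For a generic $f\in\mathrm{SOS}_2$, unique factorization in $\CC[x_0,\ldots,x_n]$ forces the two degree-$d$ factors of $f$ to be determined up to their order, so $\mu$ is generically $2$-to-$1$ onto its image, which has dimension $2(N-1)$. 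The projection formula then yields
\[
\deg(\mathrm{SOS}_2)=\tfrac{1}{2}\int_{\PP^{N-1}\times\PP^{N-1}}(H_1+H_2)^{2N-2}=\tfrac{1}{2}\binom{2N-2}{N-1}=\binom{2N-3}{N-2}.
\]

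To match the displayed product form I would invoke the Jozefiak--Lascoux--Pragacz formula: the quantity $\prod_{i=0}^{N-3}\binom{N+i}{N-2-i}/\binom{2i+1}{i}$ is the classical degree of the variety of $N\times N$ complex symmetric matrices of rank at most $2$, and a standard combinatorial simplification shows it equals $\binom{2N-3}{N-2}$. The main obstacle is justifying that $\mu$ is generically \emph{exactly} $2$-to-$1$. This requires a generic $f\in\mathrm{SOS}_2$ to admit a single unordered factorization $f=gh$ with $g,h\in\Sym^dV$, which holds whenever a generic degree-$d$ form in $n+1$ variables is irreducible -- in particular whenever $n\ge 2$. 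Under this mild hypothesis $\mathrm{SOS}_2$ is a proper subvariety of $\PP(\Sym^{2d}V)$ and the formula is meaningful; in the degenerate binary cases $\mu$ becomes a higher cover and the formula collapses to the trivial value $1$.
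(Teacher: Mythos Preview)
Your argument is correct, and for $\mathrm{SOS}_2$ it is genuinely different from the paper's. The paper works on the symmetric-matrix side: it uses the decomposition $\Sym^2(\Sym^dV)=\Sym^{2d}V\oplus C$, shows that the secant variety $\Sigma_2(\nu_2(\PP(\Sym^dV)))$ misses the center $C$, proves that the linear projection $\pi_C$ is injective on $\Sigma_2$ (this step ultimately rests on Theorem~\ref{thm:sos2f}, which uses unique factorization just as you do), and then quotes the Harris--Tu/Segre formula for $\deg\Sigma_2$. You bypass the Gram-matrix picture entirely: the identity $f_1^2+f_2^2=(f_1+if_2)(f_1-if_2)$ lets you parametrize $\mathrm{SOS}_2$ directly by the bilinear multiplication map $\mu\colon\PP^{N-1}\times\PP^{N-1}\to\PP(\Sym^{2d}V)$, and the degree drops out of $(H_1+H_2)^{2N-2}$ together with the $2$-to-$1$ count. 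Your route is more elementary and yields the closed form $\binom{2N-3}{N-2}$ without invoking the determinantal degree formula; the paper's route has the advantage that it places $\mathrm{SOS}_k$ inside a uniform framework which still gives the upper bound $\deg(\mathrm{SOS}_k)\le\deg\Sigma_k(\nu_2(\PP U))$ for $k\ge 3$, something your factorization trick cannot see. For $\mathrm{SOS}_1$ the two arguments are essentially the same: your squaring map $\sigma$ is exactly $\pi_C\circ\nu_2$, and your direct injectivity check replaces the paper's Lemma~\ref{lemma: inj}. Both proofs implicitly need $n\ge 2$ for the $\mathrm{SOS}_2$ statement, and you flag this correctly.
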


{Moreover, the dominant map
$$\begin{matrix}\pi\colon&\prod_{i=1}^k \Sym^dV&\to&\mathrm{SOS}_k\\
&(f_1,\dots,f_k)&\mapsto &\sum_{i=1}^kf_i^2\end{matrix}$$
has fibers $\pi^{-1}(f)=\mathrm{SOS}_k(f)$, so that 
Theorem \ref{thm:dimsoskf} implies the following.
\begin{cor}\label{cor:dimsosk}
$$\dim \mathrm{SOS}_k\le k\binom{n+d}{n}-\binom{k}{2}$$
and equality holds for $k\le n$ and a general $f\in\mathrm{SOS}_k$.
\end{cor}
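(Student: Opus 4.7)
The plan is a direct application of the theorem on fibers of a dominant morphism to the map
$$\pi\colon\prod_{i=1}^k\Sym^d V\longrightarrow\mathrm{SOS}_k,\qquad(f_1,\dots,f_k)\mapsto\sum_{i=1}^kf_i^2.$$
Both source and target are irreducible (the source as a product of vector spaces, the target because it is defined as the closure of the image of $\pi$), and $\pi$ is dominant by construction. The source has dimension $k\binom{n+d}{n}$ and the fibers are $\pi^{-1}(f)=\mathrm{SOS}_k(f)$, so for a general $f\in\mathrm{SOS}_k$ the fiber dimension theorem yields the identity
$$\dim\mathrm{SOS}_k(f)=k\binom{n+d}{n}-\dim\mathrm{SOS}_k.$$

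Next I would establish the inequality. The paragraph preceding Theorem \ref{thm:dimsoskf} already records that $\mathrm{SO}(k)$ acts on any $k$-term SOS decomposition and acts with finite stabiliser on a general decomposition (the summands are linearly independent for generic $f$, which certainly holds in the relevant range $k\le\binom{n+d}{n}$). This produces an orbit of dimension $\binom{k}{2}$ inside $\mathrm{SOS}_k(f)$, so $\dim\mathrm{SOS}_k(f)\ge\binom{k}{2}$. Substituting into the displayed identity gives $\dim\mathrm{SOS}_k\le k\binom{n+d}{n}-\binom{k}{2}$, which is the claimed upper bound. (In the degenerate range where $k$ exceeds the generic SOS-rank, $\mathrm{SOS}_k$ is the ambient $\Sym^{2d}V$ and the inequality is trivial.)

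Finally, for the asserted equality when $k\le n$, I invoke Theorem \ref{thm:dimsoskf}, which strengthens the lower bound to the equality $\dim\mathrm{SOS}_k(f)=\binom{k}{2}$ for generic $f$. Plugging this into the fiber dimension identity gives $\dim\mathrm{SOS}_k=k\binom{n+d}{n}-\binom{k}{2}$. There is essentially no obstacle once Theorem \ref{thm:dimsoskf} is in hand: the corollary is a formal consequence, so all the real content lies in the earlier dimension computation for $\mathrm{SOS}_k(f)$.
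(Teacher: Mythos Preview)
Your proposal is correct and follows essentially the same approach as the paper. The paper does not give a separate proof of this corollary; it simply observes that the dominant map $\pi$ has fibers $\mathrm{SOS}_k(f)$, quotes the lower bound $\dim\mathrm{SOS}_k(f)\ge\binom{k}{2}$ coming from the $\mathrm{SO}(k)$-action, and then invokes Theorem~\ref{thm:dimsoskf} for the equality when $k\le n$---precisely your argument via the fiber dimension theorem.
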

}
\paragraph{\bf Structure of the paper} In Section~\ref{sec:pre}, we begin by recalling some definitions in sums of squares decompositions, algebraic geometry and commutative algebra. Then, in Section~\ref{sec:sos} we investigate the variety of all possible sums of $k$-squares decompositions of a given polynomial. We describe the action of the orthogonal group of size $k$ on this variety and conjecture that there is an isomorphism between these two objects. We provide experimental and theoretical support for this conjecture and conclude by showing that it holds for $k=2$. Finally, in Section~\ref{sec:two} we use the results of Section~\ref{sec:sos} to prove a formula for the degree of the variety of all SOS decompositions of two squares in addition to a upper bound on this degree for $k \geq 3$. %and relate these results to the work of Nie, Ranestad and Sturmfels~\cite{nie2010algebraic}.

\section{Preliminaries}\label{sec:pre}
	Let $V$ be a complex vector space of dimension $n+1$. We will denote the $n$-dimensional projective space associated to $V$ by $\PP V$.
	
	\begin{definition}
		We define the $d$-Veronese embedding as the map $$
		\nu_d: \PP V\rightarrow \PP\Sym^d(V),\ \ell\mapsto \ell^d.
		$$
		
		Notice that the map $\nu_d$ is closed \cite{shafarevich2013basic}.	Therefore, we define the $d$-Veronese variety in $\PP \Sym^d(V)$ as the the image of $\PP V$ under the Veronese embedding $\nu_d$.
	\end{definition}
    \begin{definition}
        A polynomial $f\in\Sym^dV$ has rank one, or is decomposable, if $f=v^d$. The rank of a polynomial $f$ is defined as the minimum number $r\in\mathbb N$ such that $$
        f=\sum_{i=1}^r{v_i^d}.
        $$
        In other words, $f$ is the sum of $r$ decomposable polynomials.
    \end{definition}
    Observe that the Veronese variety $\nu_d(V)\subset \PP\Sym^d V$ consists exactly of the rank one polynomials.
	\begin{definition}
		Let $X$ be a subvariety of $V$. The $k$-th secant variety of $X$, denoted $\Sigma_k(X)$, is defined as the Zariski closure of the union of all the $k$ linear subspaces spanned by points in $X$. That is
		$$
		\Sigma_k(X)=\overline{\bigcup_{x_1,\dots,x_k\in X} \Span\{x_1,\dots,x_k\} }.
		$$
	\end{definition}
	If $X=\nu_d(\PP V)\subset \PP\Sym^dV$, then the generic elements in the $k$-th secant variety of the Veronese variety consist exactly of polynomials of rank $k$ as long the inclusion $\Sigma_k(\nu_d(\PP V))\subset \PP\Sym^d V$ is strict. 
	
	Let $U$ denote $\Sym^dV$. We can decompose $\Sym^{2d}U$ follows: $$
	\Sym^2U=\Sym^{2d}V\oplus C,
	$$
	where $C$ is {obtained by} plethysm, see \cite{weyman_2003} for more details. The space $C$ corresponds to the quadrics on $U$ that vanish on $\nu_d(\PP V)$. Moreover, $\Sym^{2d}V$ is {the degree two piece of the coordinate ring 
	of $\nu_d(\PP V)$}. 
	
	Let $\{x_0,\dots, x_n\}$ be a basis of $V$. Consider a basis $w_1=x_0^d,w_2=x_0^{d-1}x_1,\dots,{w_{N}}=x_n^d$, with $N=\binom{n+d}{d}$. A rank one quadric $q$ in $\Sym^2U$ has an expression $q=(\alpha_1w_1+\dots+{\alpha_{N}w_{N}})^2$, with  $\alpha_1,\dots,{\alpha_{N}}\in\CC$. Switching to the coordinates given by $V$ we have $$
		q=(\alpha_1x_0^d+\dots+{\alpha_{N}}x_n^d)^2.
	$$
	This means that rank one quadrics in $\Sym^2U$ correspond to square powers in $\Sym^{2d}V$. Furthermore, applying the same argument for a rank $k$ quadric $f\in\Sym^2U$, we see that $f$ corresponds to a sum of $k$ squares in $\Sym^{2d}V$.
	
	Notice that if $(f_1,\dots,f_k)\in \mathrm{SOS}_k(f)$, as defined in Definition~\ref{def:soskf}, then for any permutation $\sigma\in S_k$, where $S_k$ is the symmetric group of order $k$, we have that $$ \left(f_{\sigma(1)},\dots,f_{\sigma(k)}\right)\in \mathrm{SOS}_k(f).$$ One could desire to remove such "overlapping" points by taking the quotient by $S_k$. However, there is another important group, containing such permutations, that acts on $\SOS(f)$.
    
    Let $\mathrm O(k)$ be the orthogonal group of order $k$.
    %\red{Giorgio: $N$ was already defined, I have canceled its second definition.}
    %and let $N=\dim \Sym^{d}V=\binom{n+d}{n}$.
    Fix a point $(f_1,\dots,f_k)\in\mathrm{SOS_k}(f)$ and fix the ordering of the basis $\{w_1, \dots,{w_{N}}\}$ of {$\Sym^{d}V$}. Define $A$ to be the $k\times N$ matrix whose $i$-th row is the coefficients of the polynomial $f_i$. Then, $$
    xA^tAx^t=f.
    $$
    Let $O\in\mathrm{O}(k)$, then the action on the left by $A$ preserves the polynomial $f$. That is,
    $$
    x(OA)^tOAx^t=f.
    $$
    Essentially, such an action leads to a different decomposition $(f_1',\dots,f_k')$ of $f$, where $f_i'$ is the ith row of the matrix $OA$.
    
    \begin{definition}
    Let $f\in\Sym^dV$, let $\{x_0,\dots,x_n\}$ be a basis of $V$
    {and let $\partial_0,\ldots, \partial_n$ be the dual basis of $V^\vee$}. For each $m<d$, we define the linear map
    \begin{align*}
        W_f^m: & {\Sym^mV^\vee}  \to  \Sym^{d-m}V,\\
         &{\partial_{i_1}\cdots \partial_{i_m }}\mapsto \frac{\partial f}{\partial x_{i_1}\cdots \partial x_{i_m}}.
    \end{align*}
The matrix corresponding to this linear map is called the \emph{catalecticant} matrix of $f$.
    \end{definition}
    
    \medskip
    We give some cohomological definitions that are going to be used later on. Let $S=\bigoplus_q\Sym^q(V)$ be the symmetric algebra of $V$.
    
    \begin{definition}\label{def: Koszul complex vector space}
Let $R$ be a ring and $F$ a free module of rank $r$ over $R$. Given an $R$-linear map $k:F\to R$, the complex \begin{align*}
    0\to \bigwedge^rF\xrightarrow{{\varphi_{r}}}\bigwedge^{r-1}F\xrightarrow{{\varphi_{r-1}}}\dots\xrightarrow{\varphi_2}F\xrightarrow{\varphi_1}R\to0 
\end{align*}
is called the Koszul complex associated to $k$. The maps $\varphi_l$ are defined as $$
\varphi_l(e_1\wedge\dots\wedge {e_\ell})=\sum_{i=1}^\ell(-1)^{i+1}k(e_i)e_1\wedge\dots\wedge \widehat{e_i}\wedge\dots\wedge {e_\ell},
$$
where the notation $\widehat{e_i}$ means that this element is omitted from the product.
\end{definition}

    \begin{definition}\label{def: Betti}
     Let $M$ be a finitely generated graded $S$-module and let $F_0,\dots,F_m$ be the free $S$-modules that give a minimal free resolution of $M$. That is, there is an exact sequence $$
     0\to F_m\to F_{m-1}\to\dots\to F_1\to F_0\to M\to 0,
     $$
     and the matrices of the maps $\phi_i:F_{i+1}\to F_i$ have no non-zero constant entry, see \cite{eisenbud1995commutative}. The Betti number $\beta_{i,j}$ is the number of generators of degree $j$ needed to describe $F_i$. That is, $F_i=\bigoplus_j S(-j)^{\beta_{i,j}}$, where $S(-j)$ is the $j$-graded part of $S$.
\end{definition}

\begin{definition}\label{def: Tor}
Let $M$, $N$ be two graded $S$-modules and let $F_\bullet$ be a free resolution of $N$. Consider the complex $F_\bullet\otimes M$. The Tor groups are defined by $$
\mathrm{Tor}^S_p(M,N)=H^p(F_\bullet \otimes M).
$$
\end{definition}

The next result shows the relation between the $\mathrm{Tor}$ groups of $M$ and the Betti numbers of a free resolution of $M$.

\begin{proposition}\cite[Section 1]{Green}\label{prop: Green}
Let $\mathfrak{m}\subset S$ be the maximal ideal $\mathfrak{m}=\bigoplus_{q\geq1} \Sym^q(V)$ and let $\underline{k}=S/\mathfrak{m}$ be the residual field. Then,
$\mathrm{Tor}^S_p(M,\underline{k})_q$ has rank equal to $\beta_{p,q}$.
\end{proposition}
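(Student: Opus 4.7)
The plan is to exploit the minimality of the chosen free resolution: the statement essentially says that in a minimal free resolution, the Betti numbers can be read off directly from the Tor groups computed against the residue field, because the differentials vanish after reduction modulo $\mathfrak{m}$.

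First I would take the minimal free resolution of $M$ from Definition \ref{def: Betti},
\[
0\to F_m\to F_{m-1}\to\cdots\to F_1\to F_0\to M\to 0,\qquad F_i=\bigoplus_j S(-j)^{\beta_{i,j}},
\]
and recall that by construction the differentials $\phi_i\colon F_{i+1}\to F_i$ have matrix entries in $\mathfrak{m}$ (no non-zero constant entries). Following the recipe of Definition \ref{def: Tor}, I would then compute $\mathrm{Tor}^S_p(M,\underline{k})$ by tensoring the truncated resolution $F_\bullet \to 0$ with $\underline{k}=S/\mathfrak{m}$ and taking homology.

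The key step is the observation that after tensoring with $\underline{k}$ the induced maps $\phi_i\otimes\mathrm{id}_{\underline{k}}\colon F_{i+1}\otimes_S \underline{k}\to F_i\otimes_S \underline{k}$ are identically zero, because their matrix entries lie in $\mathfrak{m}$ and $\mathfrak{m}\cdot\underline{k}=0$. Hence the complex has trivial differentials, and the homology is simply $\mathrm{Tor}^S_p(M,\underline{k})=F_p\otimes_S \underline{k}$. Using $S(-j)\otimes_S\underline{k}=\underline{k}(-j)$, I obtain
\[
\mathrm{Tor}^S_p(M,\underline{k}) \;=\; \bigoplus_j \underline{k}(-j)^{\beta_{p,j}}.
\]
Taking the degree-$q$ piece and noting that $\underline{k}(-j)_q$ is one-dimensional when $j=q$ and zero otherwise gives $\dim \mathrm{Tor}^S_p(M,\underline{k})_q = \beta_{p,q}$.

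The only subtle point is really the step where the differentials die after tensoring with $\underline{k}$; this is where minimality of the resolution is essential, and without it one would only get inequalities. A complementary remark I would add, to justify that the answer depends only on $M$ and not on the particular minimal resolution chosen, is that $\mathrm{Tor}$ is independent of the chosen resolution (it can be computed from any projective resolution, and free resolutions of finitely generated graded modules over $S$ are projective). This makes the Betti numbers well-defined invariants of $M$, matching the statement of the proposition.
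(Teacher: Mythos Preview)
The paper does not give its own proof of this proposition; it is stated with a citation to \cite{Green} and used as a black box. Your argument is correct and is precisely the standard proof of this fact.

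One minor technical remark: the paper's Definition~\ref{def: Tor} computes $\mathrm{Tor}^S_p(M,N)$ by resolving the \emph{second} argument $N$ and tensoring with $M$, whereas you resolve the first argument $M$ and tensor with $\underline{k}$. These agree by the balancedness of $\mathrm{Tor}$ (one can resolve either variable), which is standard but worth a one-line mention so that your computation literally matches the definition you are invoking.
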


This connection between the Betti numbers and the Tor groups is important because it correlates the Betti numbers with cohomology. This allows us to use semi-continuity on the Betti numbers, as explained in the next theorem.

\begin{theorem}\cite[Theorem 12.8]{Hart}\label{thm: semicont}
Let $f:X\rightarrow Y$ be a projective morphism of noetherian schemes. Let $\mathcal F$ be a coherent sheaf on $X$ and flat over $Y$, in other words, $\mathcal F$ is a finitely presented $\OO_X$-module and the functor $\text{\textendash}\otimes \OO_{Y,f(x)}:\mathrm{Mod}_{\mathcal F_x}\to\mathrm{Mod}_{\mathcal F_x}$ is exact for every $x\in X$. Then for each $i\geq0$, the function 
$$
y\mapsto \dim H^i(X_y,\mathcal F_y)
$$
is upper semi-continuous on $Y$.
\end{theorem}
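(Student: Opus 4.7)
The plan is to reduce the statement to an affine base and then to a finite-dimensional linear algebra question. Since the assertion is local on $Y$, I would assume $Y = \Spec A$ with $A$ noetherian. The central tool, provided by a standard truncation argument (\emph{Mumford's trick}), is the existence of a bounded complex $K^\bullet$ of finitely generated free $A$-modules such that for every $A$-algebra $B$ there are natural isomorphisms
\[
H^i\bigl(X \times_Y \Spec B,\; \mathcal{F} \otimes_A B\bigr) \cong H^i(K^\bullet \otimes_A B).
\]
To produce $K^\bullet$, I would choose a finite affine open cover of $X$ (possible since $f$ is projective, hence separated with $X$ quasi-compact) and consider the associated \v{C}ech complex; it is bounded, computes $H^i$, and is termwise flat over $A$ by flatness of $\mathcal{F}$ over $Y$. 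A quasi-isomorphic bounded complex of finite free $A$-modules is then obtained so that the base-change identity above continues to hold.

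Taking $B = k(y)$, the fiber cohomology equals the cohomology of $K^\bullet \otimes_A k(y)$. Writing the differentials of this complex as matrices $d^i(y)$ with entries in $k(y)$, I would apply the rank--nullity identity
\[
\dim_{k(y)} H^i\bigl(K^\bullet \otimes_A k(y)\bigr) = \rk K^i - \rk d^i(y) - \rk d^{i-1}(y),
\]
in which $\rk K^i$ is constant on $Y$. The function $y \mapsto \rk d^i(y)$ is lower semi-continuous on $\Spec A$, since the locus where the rank is strictly less than $r$ is cut out by the vanishing of all $r \times r$ minors of $d^i$ and is therefore closed. Subtracting two lower semi-continuous functions from a constant then yields an upper semi-continuous function, which is exactly the desired conclusion.

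The hard part is the construction of the bounded complex $K^\bullet$ of finitely generated \emph{free} modules compatible with arbitrary base change: one must pass from a bounded complex of possibly huge flat $A$-modules to a quasi-isomorphic bounded complex of finite free $A$-modules while preserving the identification of cohomology after any change of base ring. Once this is granted, the remaining argument is formal and reduces to semi-continuity of matrix ranks; everything else (reduction to affine $Y$, \v{C}ech cohomology, use of flatness) is routine.
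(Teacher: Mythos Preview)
The paper does not supply its own proof of this statement; it is quoted verbatim as \cite[Theorem 12.8]{Hart} and used as a black box. Your outline is the standard argument given in that reference (Hartshorne, Chapter~III, \S12): reduce to an affine base, replace \v{C}ech cohomology by a bounded complex of finite free modules that commutes with base change (this is Proposition~12.2 there, the ``Grothendieck complex''), and then read off upper semi-continuity from lower semi-continuity of the ranks of the differentials. So there is nothing to compare against in the paper itself, and your sketch matches the proof in the cited source.
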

\section{The degree of the variety of all SOS decompositions}\label{sec:sos}
Let $f = \sum_{i=1}^k f_i^2 \in \CC[x_0, \dots, x_n]$ be a sum of squares with degree $2d$. We consider the variety  in the ambient space $\prod_{i=1}^k Sym^d(V)$ of all possible SOS decompositions of the given polynomial $f$. 
$$\text{SOS}_k(f) = \{(f_1,\ldots, f_k)\in \prod_{i=1}^k Sym^d(V)|\sum_{i=1}^k f_i^2=f\}$$
We conjecture the degree of this variety, when $n \geq k$, to be the degree of the orthogonal group $\text{O}(k)$. In \cite{brandt2017degree} the authors give the degree of $\text{SO}(k)$, and thus $\text{O}(k)$, to be the determinant of the following binomial matrix

\begin{equation}\label{eq:degort}
    \deg \text{O}(k) = 2^{k} \det \left ( \left( \binom{2k - 2i - 2j}{k-2i} \right)_{1 \leq i,j \leq \lfloor{}\frac{k}{2} \rfloor{}} \right).
\end{equation}
For the case $d=1$, the {argument} simplifies and so we give the following lemma.
\begin{lemma}
Let $f \in \Sym^2V$ be a quadric of $\mathrm{SOS}$-rank $k \leq n$. Then, in the affine setting, the degree of $\mathrm{SOS}_k(f)$ is equal to the degree of $\mathrm{O}(k)$.
\end{lemma}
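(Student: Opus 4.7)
The plan is to identify $\mathrm{SOS}_k(f)$ with $\mathrm{O}(k)$ through an explicit linear embedding and then invoke the invariance of degree under linear embeddings. When $d=1$ we have $\Sym^d V = V$, so fixing the basis $\{x_0,\dots,x_n\}$ a tuple $(f_1,\dots,f_k)\in\prod_{i=1}^k V$ corresponds to a $k\times (n+1)$ matrix $A$ whose $i$-th row lists the coefficients of $f_i$. Writing $f = x^T M_f x$, the equation $\sum f_i^2 = f$ becomes $A^T A = M_f$, so
\[
\mathrm{SOS}_k(f) \;=\; \{A\in M_{k\times(n+1)}(\CC) : A^T A = M_f\}.
\]
Since $f$ has SOS-rank exactly $k$, one can pick $A_0\in\mathrm{SOS}_k(f)$ with $\rk(A_0)=k$, and then $M_f=A_0^T A_0$ itself has rank $k$.

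Next I would define the $\CC$-linear map $\Phi\colon M_k(\CC)\to M_{k\times(n+1)}(\CC)$ by $\Phi(O)=OA_0$. Because $A_0$ has full row rank $k$, the matrix $A_0\colon\CC^{n+1}\to\CC^k$ is surjective, which forces $\Phi$ to be injective. A direct computation $(OA_0)^T(OA_0)=A_0^T(O^TO)A_0$ shows that $\Phi$ sends $\mathrm{O}(k)$ into $\mathrm{SOS}_k(f)$. The heart of the proof is the converse: every $A\in\mathrm{SOS}_k(f)$ arises as $OA_0$ for some $O\in\mathrm{O}(k)$.

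For this surjectivity, given $A\in\mathrm{SOS}_k(f)$, note that $A$ has rank $k$ since $\rk(A^T A)=\rk(M_f)=k$. The always-valid inclusion $\mathrm{im}(A^T A)\subseteq\mathrm{im}(A^T)$ is then an equality by dimension count (both have dimension $k$), and likewise for $A_0$. Hence the row spaces satisfy
\[
\mathrm{im}(A^T)=\mathrm{im}(A^T A)=\mathrm{im}(A_0^T A_0)=\mathrm{im}(A_0^T),
\]
so the rows of $A$ are $\CC$-linear combinations of those of $A_0$, giving a unique $O\in M_k(\CC)$ with $A=OA_0$. Plugging back into $A^T A=A_0^T A_0$ yields $A_0^T(O^TO-I)A_0=0$, and surjectivity of $A_0$ forces $O^TO=I$.

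Finally, $\Phi$ is a $\CC$-linear injection between affine spaces, and its projective extension is a linear embedding of projective spaces under which degree of any subvariety is preserved. Therefore $\deg\mathrm{SOS}_k(f)=\deg\mathrm{O}(k)$. The only delicate point is the equality $\mathrm{im}(A^T)=\mathrm{im}(A^T A)$ in the complex setting (where $A^TAv=0$ does not imply $Av=0$ in general); this is the step that would fail without the SOS-rank assumption, since it relies crucially on $A$ having full row rank $k$.
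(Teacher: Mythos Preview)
Your proof is correct and, in several respects, tighter than the paper's. Both arguments establish that $\mathrm{SOS}_k(f)$ consists of a single $\mathrm{O}(k)$-orbit under $O\mapsto OA_0$, but the routes differ. The paper performs a change of coordinates so that the first $k$ columns of $A$ are independent, applies a QR decomposition $A=QR$, and then argues that the upper-triangular $R$ is determined by the equation $\norm{Rx^t}^2=f$ up to sign flips in the rows; this yields uniqueness of the orbit. You instead use the rank equality $\rk(A^TA)=\rk(A)=k$ to force $\mathrm{im}(A^T)=\mathrm{im}(A^TA)=\mathrm{im}(A_0^T)$, which immediately gives $A=OA_0$ for some $O\in M_k(\CC)$ and then $O^TO=I$ from surjectivity of $A_0$.

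Your row-space argument is cleaner: it avoids the preliminary coordinate change and the somewhat informal ``identify exactly the entries of $R_1$'' step in the paper. It is also essentially the content of the paper's later Lemma~\ref{lem:Ok}, which you have in effect re-proved in situ. Finally, you are explicit about the point the paper leaves implicit: a single orbit gives an abstract isomorphism $\mathrm{O}(k)\cong\mathrm{SOS}_k(f)$, but equality of degrees requires that this isomorphism be induced by a linear embedding of ambient affine spaces, which you verify. The paper's approach, on the other hand, has the virtue of making visible \emph{why} the argument breaks for $d\ge 2$ (the moment relations among the columns obstruct the uniqueness step), a point the authors exploit in the discussion that follows.
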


\begin{proof}
With $f= \sum_{i=1}^k f_i^2$, $n \geq k$ implies that we can encode $f$ in a $k \times (n+1)$ matrix, $A$, whose rows give the coefficients of the linear forms $f_i$. Then, with $\bx = (x_0, \dots, x_n)$ we have that $\norm{Ax^t}^2 = f$. Thus, for any orthogonal matrix $O\in\mathrm{O}(k)$ we have that \[ \norm{OA\bx^t}^2 = (OA\bx^t)^t (OA\bx^t) = \bx A^tO^t OA\bx^t = \bx A^tA\bx^t = \norm{A\bx^t}^2 = f\] Hence, there is an action on the $\mathrm{SOS}_k(f)$ variety by $\mathrm{O}(k)$. Additionally, there are at least two identical irreducible components that correspond to $\det O = \pm 1$.

We now show that up to a change of coordinates and multiplication by an orthogonal matrix, this SOS expression is unique. Let $A$ and $B$ be $k \times (n+1)$ matrices encoding SOS decomposition of $f$. Then, up to a change of coordinates, we can ensure that the first $k$ columns are linearly independent and so $\mathrm{QR}$ decompositions can be found. Thus, let $A = Q_1R_1$ and $B = Q_2R_2$ where $Q_1,Q_2$ are $k \times k$ orthogonal matrices and $R_1, R_2$ are $k \times (n+1)$ upper triangular matrices. Then, $R_1$ and  $R_2$ also encode SOS decompositions of $f$.  By the equation $\norm{R_1x^t}^2 = f$ we can identify exactly the entries of $R_1$, up to multiplication by $\pm 1$ in the rows, or in other words, up to multiplication by an orthogonal matrix. The same holds for $R_2$ and so the decompositions encoded by $A$ and $B$ must be in the same orbit of the action of $\text{O}(k)$ on $\text{SOS}_k(f)$. Therefore, there is only one orbit and so the degree of $\text{SOS}_k(f)$ is equal to the degree of $\text{O}(k)$. 
%Moreover, if we restrict to $\text{SO(k)}$ then we reduce to an irreducible variety with half the degree.
\end{proof}

The argument above also works partially for the case $d \geq 2$. Once a basis is chosen for $\Sym^d V$, we can construct the matrix in the same way with $k$ rows but $\binom{n+d}{d}$ columns. Then, the group $\mathrm{O}(k)$ acts on the left to give new decompositions. However, the QR decomposition no longer implies uniqueness of the orbit. This is because there exist relations between the monomials described by the columns of $f$. In other words, the Gram matrix associated to $f$ is not only symmetric but also has a moment structure. Thus, it is no longer easy to see that the non-linear equations given by the norm of $Ax^t$ squared, $\norm{Ax^t}^2$, have a unique solution. 

Experimentally, up to $k \leq 6$, we observe a stabilisation of the degree of the variety $\SOS(f)$ as the degree of $f$ increases. The following table derives from \cite[Table~1]{brandt2017degree}. Since the degree of $\mathrm{SOS}_7(f)$ is at least $233,232$ for a generic $f \in \mathrm{SOS}_7$, $k \leq 6$ is the currently the limit for our experimental methods. 
    
\begin{table}[htb]
    \centering
    \begin{tabular}{cccc}
         \hline
         $k$ & Symbolic & Formula ($\text{O}(k)$) & Formula ($\text{SO}(k)$) \\
         \hline
         $2$ & 4 & 4 & 2\\
         $3$ & 16 & 16 & 8\\
         $4$ & 80 & 80 & 40 \\
         $5$ & 768 & 768 & 384\\
         $6$ & 9356 & 9356 &4768\\
         $7$ & - & 233232 &111616\\
         $8$ & - & 6867200 & 3433600\\
         $9$ & - & 393936896 & 196968448 \\
         \hline
    \end{tabular}
    \caption{Degree of $\text{SOS}_k(f)$ for $n \geq k$. See formula~\ref{eq:degort} for the degree of $\text{O}(k)$.}
    \label{tab:Ok}
\end{table}
The next example shows that 
 %also for $n>1$ 
 the condition $n\geq k$ is sharp.
 
 \begin{example}
  The general plane quartic can be expressed as $g_1^2+g_2^2+g_3^2$ in $63$ ways, where $g_i\in\Sym^2\CC^3$. A proof of such result is presented in \cite[Theorem 6.2.3]{Dolg}. The idea is to consider the quartic form as the determinant of a $2\times2$ matrix whose entries are quadric forms.
 \end{example}

The next lemma gives an indication of the connection between $k$-SOS decompositions and the orthogonal group $\mathrm O(k)$. 
    %The next lemma gives an indication of why the SOS decomposition is connected with the orthogonal group $\mathrm O(k)$. 
    Indeed, fixing a matrix $A_0\in \mathcal M_{k\times N}$ is equivalent to fixing a sum of squares decomposition of rank $k$ of $f=x^tA_0^TA_0x$.
    
    \begin{lemma}\label{lem:Ok}
        Let $N\geq k\geq 1$ be integers and $A,A_0\in \mathcal M_{k\times N}$ be matrices, $A_0$ of maximal rank and consider the entries of $A$ as variables $x_{ij}$. Then the variety $Y$ defined by the equation $$
        A^tA=A_0^tA_0
        $$
        is isomorphic to $\mathrm O(k)$.
    \end{lemma}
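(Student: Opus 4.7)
The plan is to exhibit an explicit isomorphism of varieties $\phi \colon \mathrm{O}(k) \to Y$ with a regular inverse. The natural candidate is $\phi(M) = M A_0$, which maps into $Y$ since $(M A_0)^t (M A_0) = A_0^t M^t M A_0 = A_0^t A_0$ for any $M \in \mathrm{O}(k)$, and which is clearly polynomial in the entries of $M$. Its injectivity is immediate: since $A_0$ has $k$ linearly independent columns, the equation $M A_0 = M' A_0$ forces $M = M'$.

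For a candidate inverse I would exploit the maximal rank of $A_0$ to conclude that the $k \times k$ matrix $A_0 A_0^t$ is invertible, and then set $\psi(A) = A A_0^t (A_0 A_0^t)^{-1}$. This is linear (hence regular) in the entries of $A$. The defining relation $A^t A = A_0^t A_0$ gives
\[
\psi(A)^t \psi(A) = (A_0 A_0^t)^{-1} A_0 (A^t A) A_0^t (A_0 A_0^t)^{-1} = (A_0 A_0^t)^{-1} A_0 A_0^t A_0 A_0^t (A_0 A_0^t)^{-1} = I,
\]
so $\psi$ really lands in $\mathrm{O}(k)$. The composition $\psi \circ \phi$ equals the identity in one line: $\psi(M A_0) = M A_0 A_0^t (A_0 A_0^t)^{-1} = M$.

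The main obstacle is the reverse composition $\phi \circ \psi = \mathrm{id}_Y$, namely the identity $A A_0^t (A_0 A_0^t)^{-1} A_0 = A$ for every $A \in Y$. Since $A_0^t (A_0 A_0^t)^{-1} A_0$ is the projector of $\CC^N$ onto the row space of $A_0$, this amounts to showing that the row space of $A$ is contained in that of $A_0$. To establish this, I would view both $A^t A$ and $A_0^t A_0$ as endomorphisms of $\CC^N$ and compare their images. The image of $A_0^t A_0$ equals the column span of $A_0^t$, i.e.\ the row space of $A_0$, and has dimension $k$ because $A_0 A_0^t$ is invertible. The defining relation then forces $\rk(A^t A) = k$, whence $\rk(A) = k$ and the image of $A^t A$ is the row space of $A$. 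Equating the two images yields the desired containment of row spaces, closing the argument.
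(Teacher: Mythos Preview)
Your argument is almost complete but contains one genuine gap. You claim that the maximal rank of $A_0$ forces $A_0 A_0^t$ to be invertible. Over $\RR$ this is true (the matrix is positive definite), but the paper works over $\CC$ with the standard \emph{bilinear} form, and there the implication fails. For instance
\[
A_0 = \begin{pmatrix} 1 & i & 0 \\ 0 & 0 & 1 \end{pmatrix}
\qquad\text{has rank }2,\qquad
A_0 A_0^t = \begin{pmatrix} 0 & 0 \\ 0 & 1 \end{pmatrix}.
\]
Once $A_0 A_0^t$ is singular, neither your inverse map $\psi$ nor your row-space argument survives: $\rk(A_0^t A_0)$ can drop below $k$ over $\CC$, so you cannot identify $\im(A_0^t A_0)$ with the full row space of $A_0$, and the projector $A_0^t(A_0A_0^t)^{-1}A_0$ simply does not exist.

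The paper avoids this by first normalizing: right multiplication by $G\in\mathrm{GL}(N)$ sends the variety for $A_0$ isomorphically (via $A\mapsto AG$) to the variety for $A_0G$, so one may assume $A_0=[\,I_k\mid 0\,]$. Writing $A=[\,X_0\mid X_1\,]$, the equation $A^tA=A_0^tA_0$ breaks into blocks $X_0^tX_0=I_k$, $X_0^tX_1=0$, $X_1^tX_1=0$; since $X_0$ is then invertible, the second block forces $X_1=0$ and hence $Y\cong\mathrm{O}(k)$. After this normalization $A_0A_0^t=I_k$, and your maps $\phi,\psi$ reduce to $M\mapsto[\,M\mid 0\,]$ and $[\,X_0\mid X_1\,]\mapsto X_0$. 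So your construction is correct in spirit; inserting the same preliminary $\mathrm{GL}(N)$-reduction both repairs the gap and makes your proof coincide with the paper's.
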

    \begin{proof}
        Up to an action of the group of $N \times N$ invertible matrices, $\mathrm{GL}(N)$, on the left and $\mathrm O(k)$ on the right of $A_0$, we may suppose without loss of generality that $A_0=\begin{bmatrix}I_k&0
        \end{bmatrix}
        $, with $I_k$ the $k\times k$ identity matrix and $0$ a null matrix of size $k\times (N-k)$. Let $A=\begin{bmatrix}X_0&X_1 \end{bmatrix}$, again with $X_0$ a $k\times k$ matrix and $X_1$ a $k\times (N-k)$ matrix.
        
        In those coordinates, the variety is determined by $$
        \begin{bmatrix}
        X_0^tX_0&X_0^tX_1\\
        X_1^tX_0&X_1^tX_1
        \end{bmatrix}=\begin{bmatrix}
        I_k&0\\
        0&0
        \end{bmatrix}.
        $$
        
        The first block implies that $X_0^tX_0\in \mathrm O(k)$. Moreover $X_0^tX_1=0$ implies that $X_1=0$ since $X_0$ is invertible.
    \end{proof}

Let $f\in \Sym^{2d}V$ be a sum of $k$ squares $f=\sum_{i=1}^kf_i^2$. Then, $f=xA^tAx^t$, where $A\in\mathcal M_{k\times N}$ has the coefficients of $f_i$ as its $i$-th row. This gives a natural isomorphism \begin{equation}\label{eq:soskB}
\mathrm{SOS}_k(f)\cong \{B\in\mathcal M_{k\times N}|xB^tBx^t=f\}.
\end{equation}

Denote the Gram matrix $W_A=A^tA$ and note that $\rk W_A = k$ when the above decomposition is minimal. The previous lemma implies that $\mathrm O(k)\cong \{B\in\mathcal M_{k\times N}|W_B=W_A\}\subset \mathrm{SOS}_k(f).$ This, together with the isomorphism \eqref{eq:soskB} implies that $\mathrm{SOS}_k(f)$ can be described by as many copies of $\mathrm{O}(k)$ as the number of distinct symmetric matrices $W_A$ of rank $k$ such that $x^tW_Ax=f$.

Let $f\in \Sym^{2d}V$ and $N=\binom{n+d}{d}$. Notice that the following diagram commutes.
\begin{equation}\label{eq: diagram}
\begin{tikzcd}
\mathbb C^k\otimes\mathbb C^N \arrow[rr, "\varphi", "A\ \longmapsto\ A^tA"']\arrow[rrrr, bend left, "A\ \mapsto xA^tAx^t"] &   & \Sym^2(\mathbb C^N) \arrow[rr, "\pi", "B\ \longmapsto\ xBx^t"'] &  & \Sym^{2d} V
\end{tikzcd}
\end{equation}

We have that $\mathrm{SOS}_k(f)=\{A|\pi(A^tA)=f \}$. Moreover, if $B\in \im \varphi$ then $\rk B\leq k$.

The fiber $\pi^{-1}(f)= W_0+C$, where $W_0$ is the rank $N$ catalecticant matrix of $f$ such that $xWx^t=f$ and $C$ is the variety \[C = \{C_0 \in S^N \; | \; x^T C_0 x = 0\}.\] 

This means that the problem can be reformulated in terms of the intersection $\varphi(\CC^k\otimes \CC^{N})\cap (C+W_0)$: when this intersection is just a single point, as is the case for $k \leq 6$ shown in Table~\ref{tab:Ok}, this implies that there exists only one $C_0\in C$ such that $W_0+C_0$ has rank $k$. This is equivalent to saying that $\mathrm{SOS}_k(f)$ consists of a single copy of $\mathrm{O}(k)$. Thus, we arrive at the following conjecture.
\begin{conjecture}\label{conj}
Let $f \in \Sym^{2d}V$ be generic of SOS-rank $k \leq n$ and let $N = \binom{n+d}{d}$. Then, 
$$\mathrm{SOS}_k(f)\cong \{A \in \CC^k \otimes \CC^N \mid A^tA=W,\ xWx^t=f\}=\{A \in \CC^k \otimes \CC^N \mid A^tA=W_0+C_0\}=\mathrm O(k).$$
\end{conjecture}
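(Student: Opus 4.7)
The plan is to reduce Conjecture~\ref{conj} to showing that the restriction $\bar\pi$ of the multiplication map $\pi\colon B \mapsto xBx^t$ from diagram~\eqref{eq: diagram} to the variety $\mathcal{S}_{\le k}\subset\Sym^2(\CC^N)$ of symmetric $N\times N$ matrices of rank at most $k$ is birational onto $\mathrm{SOS}_k$. By Lemma~\ref{lem:Ok}, any rank-$k$ symmetric $W$ with $xWx^t=f$ contributes an $\mathrm{O}(k)$-orbit inside $\mathrm{SOS}_k(f)$ isomorphic to $\mathrm{O}(k)$, and $\mathrm{SOS}_k(f)$ is the disjoint union of these copies as $W$ varies. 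The conjecture is therefore equivalent to uniqueness of such $W$ for generic $f$, i.e.\ to $\deg\bar\pi=1$.

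First I would verify that $\bar\pi$ is generically finite and \'etale. Generic finiteness follows from the dimension equality $\dim\mathcal{S}_{\le k}=kN-\binom{k}{2}=\dim\mathrm{SOS}_k$, the latter by Corollary~\ref{cor:dimsosk}. For \'etaleness at a smooth point $W=A^tA$ with $A$ of rank $k$, I would use the tangent description $T_W\mathcal{S}_{\le k}=\{A^tX+X^tA:X\in\mathcal M_{k\times N}\}$ to compute
\[ d\bar\pi(A^tX+X^tA)\;=\;2\sum_{i=1}^k f_i g_i,\]
where $f_i$ is the $i$-th row of $A$ and $g_i=(Xx^t)_i$ runs freely over $\Sym^dV$. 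Hence the image of $d\bar\pi|_{T_W}$ is the degree-$2d$ piece of the ideal $(f_1,\ldots,f_k)\subset S$. For $k\le n$ and generic $f_i$ the forms are a regular sequence, and the Koszul resolution of Definition~\ref{def: Koszul complex vector space} yields $\dim(f_1,\ldots,f_k)_{2d}=kN-\binom{k}{2}=\dim T_f\mathrm{SOS}_k$. So $d\bar\pi|_{T_W}$ is an isomorphism, $\bar\pi$ is generically \'etale, and its degree equals the number of rank-$k$ Gram matrices of a generic $f$.

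The hard part will be to upgrade generic \'etaleness to degree one. Three avenues I would pursue, in order of promise. First, search for a canonical linear-algebra recipe extracting the minimal-rank Gram matrix of $f$ from its catalecticants: such a rational section of $\bar\pi$ would force the degree to be one. The middle catalecticant $W_f^d$ is not itself a section (its rank generically exceeds the SOS-rank), but one may try to correct it using the kernels of higher catalecticants together with the subspace $C$ of plethysm relations in diagram~\eqref{eq: diagram} to single out the unique rank-$k$ element of the affine fibre $W_0+C$. Second, argue by semi-continuity (Theorem~\ref{thm: semicont}) along an algebraic family degenerating a generic $f\in\mathrm{SOS}_k$ to the $d=1$ setting already handled by the Lemma preceding the conjecture, where the QR-decomposition argument forces uniqueness. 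Third, attack the monodromy of the finite cover $\bar\pi$ directly and prove it is trivial, in the spirit of identifiability results for symmetric tensors. The essential difficulty is that the two-factor trick $f=(f_1+if_2)(f_1-if_2)$ underlying Theorem~\ref{thm:sos2f} has no analogue for $k\ge 3$, so uniqueness of the rank-$k$ Gram matrix must be extracted from genuinely higher-order apolar structure rather than from a polynomial factorization.
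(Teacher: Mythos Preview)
The statement you are addressing is a \emph{conjecture} in the paper, and the paper does not prove it for general $k$. What the paper actually establishes is: the case $d=1$ via QR decomposition, the structural fact $\mathrm{SOS}_k(f)\cong\mathrm{O}(k)^p$ for some $p\ge 1$ (Corollary~\ref{cor:okcomponent}) via the Koszul-syzygy/semi-continuity argument, a supporting dimension count, and the full case $k=2$ (Theorem~\ref{thm:sos2f}) using the factorisation $f=(g+ih)(g-ih)$ together with unique factorisation. For $k\ge 3$ and $d\ge 2$ the paper offers only the experimental evidence of Table~\ref{tab:Ok}. So there is no ``paper's proof'' against which to compare.

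Your proposal is, correspondingly, not a proof but a research outline, and you say so. The first two paragraphs are correct and essentially reproduce the paper's own reformulation: the reduction to uniqueness of the rank-$k$ Gram matrix in the affine fibre $W_0+C$, and the tangent/Koszul computation showing that $\bar\pi$ is generically \'etale with degree equal to the number of such matrices. This is exactly the content surrounding diagram~\eqref{eq: diagram} and Corollary~\ref{cor:okcomponent}, so here you and the paper are aligned.

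The genuine gap is your third paragraph: none of the three avenues is an argument. The first (a rational section via catalecticants) is a wish rather than a construction. The second is problematic on two counts: $d$ is a discrete parameter, so there is no algebraic family ``degenerating to $d=1$''; and even granting some specialisation, knowing that a \emph{special} fibre of a generically finite map is a single reduced point does not, by semi-continuity alone, bound the \emph{generic} degree from above. The third (trivial monodromy) simply restates the goal. You correctly identify the obstruction---the two-factor trick behind Theorem~\ref{thm:sos2f} has no analogue for $k\ge 3$---and that is precisely why the paper leaves the statement open.
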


Of course, if this intersection consists of more than a single point, one would arrive at exactly the number of copies of $\mathrm O(k)$ such that $\mathrm{SOS}_k(f)$ is isomorphic. 

Consider a tuple $(f_1,\dots,f_k)\in \mathrm{SOS}_k(f)$, we denote the tangent space of $\mathrm{SOS}_k(f)$ at this point by $\mathrm{TSOS}_k(f)_{(f_1,\dots,f_k)}$. Recall that if we consider an orthogonal matrix $O\in \mathrm O(k)$ and $A_f\in \mathcal M_{k\times N}$, then the rows of $A_fO$ are polynomials giving a $k$-SOS decomposition of $f$. 

We are interested in understanding the local behavior of this variety. More specifically, we want to show that the tangent space $\mathrm{TSOS}_k(f)_{(f_1,\dots,f_k)}$ has dimension equal to the dimension of $\mathrm O(k)$. This means that locally, the variety $\mathrm{SOS}_k(f)$ is exactly equal to $\mathrm O(k)$. In order to do that, we can show that the only syzygies of a vector $(f_1,\dots,f_k)\in \mathrm{SOS}_k(f)$ are given by the Koszul syzygies. In the next paragraphs, we further explain the concept of Koszul syzygies and how they are related to the tangent space of $\mathrm{SOS}_k(f)$.

Let $A_f$ be the matrix whose rows are the coefficients of $f_1,\dots,f_k$. Observe that the map $$\phi:A\mapsto xA^tAx^t-f$$ gives $\mathrm{SOS}_k(f)$ as the fiber at zero. Therefore, the tangent space $\mathrm{TSOS}_k(f)_{(f_1,\dots,f_k)}$ 
%$\mathrm{TSOS}_k(f)_{A_f}$ 
is the space generated by the nullity of the derivative of $\phi$ at the point $(f_1,\dots,f_k)$. This equivalent to saying that 
\begin{equation}\label{eq:skew}
x(A_f^tV+V^tA_f)x^t=0    
\end{equation}
 where $V\in\mathcal M_{k\times N}$. Notice that equation~\eqref{eq:skew} is trivially satisfied when $A_f^tV$ is a skew-symmetric matrix. A syzygy satisfying this equation is a Koszul syzygy of $(f_1,\dots,f_k)$. If we have that the Koszul syzygies are the only syzygies of the point $(f_1,\dots,f_k)$, we obtain that they span the tangent space at this point. In such case, the tangent space has dimension equal to the dimension of $\mathrm{O}(k)$.

A more geometric and intuitive explanation can be described by looking at the usual set of coordinates instead of matrices. We may see $\mathrm{SOS}_k(f)$ as the nullity of the map $$\varphi:(h_1,\dots,h_k)\mapsto \sum_{i=1}^k h_i^2 -f.$$ The tangent space $\mathrm{TSOS}_k(f)_{(f_1,\dots,f_k)}$ is computed once again as the space generated by the nullity of the derivative of the expression $\sum_{i=1}^kh_i^2-f$ at the point $(f_1,\dots,f_k)$, that is $\varphi'(f_1,\dots,f_k)=0$. This means that the tangent space is generated by $$\sum_{i=1}^kf_ig_i=0.$$ The vanishing of this expression by considering tuples $(g_1,\dots,g_k)$ such that we have pairs $i\neq j$ with $g_i=f_j$ and $g_j=-f_i$ is a Koszul syzygy of the vector $(f_1,\dots,f_k)$. Observe that this corresponds exactly to the matrix $A_f^tV$ being skew-symmetric, where $V$ is the matrix that has $g_i$ as the ith-row.

\begin{proposition}
    The only syzygies of the vector $(x_0^{d},\dots,x_k^{d})$ are the Koszul syzygies.
\end{proposition}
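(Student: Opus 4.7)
The plan is to recognize the statement as the standard exactness of the Koszul complex on a regular sequence, specialized to the appropriate graded piece of the first syzygy module.

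Set $S=\CC[x_0,\ldots,x_n]=\Sym^\bullet V$. A syzygy of $(x_0^d,\ldots,x_k^d)$ (in the sense of the tangent space discussion) is a tuple $(g_0,\ldots,g_k)\in(\Sym^dV)^{k+1}$ such that $\sum_{i=0}^k g_i\,x_i^d=0$ in $\Sym^{2d}V$. The Koszul syzygies are the elements $\sigma_{ij}=x_j^d e_i-x_i^d e_j$ for $0\le i<j\le k$, where $e_0,\ldots,e_k$ is the standard basis of $S^{k+1}$. These clearly satisfy the syzygy relation, and the goal is to show that they span the entire syzygy space in the given graded piece.

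The main step is the observation that $x_0^d,\ldots,x_k^d$ is a regular sequence in $S$ whenever $k\le n$: each $x_i^d$ is a nonzerodivisor modulo the ideal generated by the preceding powers, since the quotient $S/(x_0^d,\ldots,x_{i-1}^d)$ is free as a $\CC[x_i,\ldots,x_n]$-module on the monomials $x_0^{a_0}\cdots x_{i-1}^{a_{i-1}}$ with $a_j<d$, so multiplication by $x_i^d$ is injective. Consequently, by the standard acyclicity of the Koszul complex of Definition~\ref{def: Koszul complex vector space} on a regular sequence, the first syzygy module of $(x_0^d,\ldots,x_k^d)$ is generated, as an $S$-module, by $\{\sigma_{ij}\}_{i<j}$.

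To finish, I would impose the degree constraint that each $g_i$ lie in $\Sym^dV$. Since each $\sigma_{ij}$ already has components of degree exactly $d$, any $S$-linear combination $\sum_{i<j}p_{ij}\sigma_{ij}$ whose components all lie in $\Sym^dV$ forces $p_{ij}\in S_0=\CC$. Therefore the space of syzygies of the required degree is exactly the $\binom{k+1}{2}$-dimensional $\CC$-span of $\{\sigma_{ij}\}_{i<j}$, proving the proposition.

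The main technical ingredient is the exactness of the Koszul complex on a regular sequence, which is standard once the regularity of $x_0^d,\ldots,x_k^d$ is noted; the only genuinely delicate point is the grading argument that keeps the coefficients $p_{ij}$ constant. A more self-contained alternative, if one prefers to avoid the Koszul machinery, is a direct monomial-by-monomial analysis: writing each $g_i$ in the monomial basis of $\Sym^dV$ and equating the coefficient of every monomial $m$ of degree $2d$ in $\sum_i g_i x_i^d$ to zero, one finds that the only monomials of $g_i$ that can be nonzero are the pure powers $x_j^d$ with $j\ne i$, and that the coefficients $c_{i,j}$ of $x_j^d$ in $g_i$ must satisfy $c_{i,j}+c_{j,i}=0$, which is precisely the $\CC$-span of the $\sigma_{ij}$. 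Either route delivers the result cleanly.
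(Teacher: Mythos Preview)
Your argument is correct. The paper's own proof is essentially your \emph{alternative} route: it writes the decomposition via the matrix $A=[I\ 0]$ in the monomial basis, expands $x(A^tV+V^tA)x^t=0$, and reads off from the coefficients of $x_i^dx_j^d$ that $v_{ij}+v_{ji}=0$, i.e.\ skew-symmetry. That is exactly the monomial-by-monomial analysis you sketch at the end.

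Your main route---recognising $x_0^d,\ldots,x_k^d$ as a regular sequence and invoking exactness of the Koszul complex, then restricting to the correct graded piece---is genuinely different and more conceptual. It has the advantage of applying verbatim to \emph{any} regular sequence of forms of degree $d$, so it immediately yields the corollary for a generic $(f_1,\ldots,f_k)$ without the separate semicontinuity argument the paper uses. The paper's direct computation, on the other hand, is completely elementary and avoids importing the Koszul machinery.

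One small wording issue in your grading step: writing a degree-$d$ syzygy as $\sum p_{ij}\sigma_{ij}$ does not literally \emph{force} $p_{ij}\in\CC$, since the $\sigma_{ij}$ satisfy the $\wedge^3$-Koszul relations over $S$. The correct statement is that one \emph{may take} $p_{ij}\in\CC$: since $\varphi_2$ is a graded map, the degree-$2d$ part of $K=\mathrm{im}\,\varphi_2$ is the image of $(\wedge^2 F)_{2d}=\CC^{\binom{k+1}{2}}$. This is what you are actually using, and it is standard; just tighten the phrasing.
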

\begin{proof}
    Let $A=\begin{bmatrix}
    I&0
    \end{bmatrix}$
    be a matrix as in equation (\ref{eq:soskB}) giving a SOS decomposition of $f=x_0^{2d}+\dots+x_k^{2d}$ in a basis $\{x_0^d,\dots,x_n^d,\dots\}$, where $I$ is the $k\times k$-identity matrix. Consider $V=\begin{bmatrix}
    v_{ij}
    \end{bmatrix}$ a $k\times N$-matrix, then $\partial \varphi(A)=V^tA+A^tV$ is the derivative of $\varphi$. The statement is equivalent to show that $x\partial\varphi(A)x^t=0$ if and only if $V^tA$ is skew-symmetric.% since this condition means the vanish by the Koszul syzygy. 
    
    In this basis, $W=V^tA+A^tV=\begin{bmatrix}
    v_{ij}+v_{ji}
    \end{bmatrix}$, and $$xWx^t=\sum_{i=1}^k\bigg(\sum_{j=1}^k(v_{ij}+v_{ji})x_j^d\bigg)x_i^d=0,$$
    since each monomial coefficient is equal to zero we obtain $2(v_{ij}+v_{ji})=0$ as desired.
\end{proof}

The importance of this result is that it guarantees that at the point $(x_0^d,\dots,x_k^d)$ the tangent space to $\mathrm{SOS}_k(x_0^{2d}+\dots+x_k^{2d})$ has dimension equal to the number of Koszul syzygies, since they span the null space of $\varphi'(x_0^d,\dots,x_k^d)$. Moreover, this dimension is exactly equal to the dimension of the tangent space of $\mathrm O(k)$. This implies that locally at the point $(x_0^d,\dots,x_k^d)$, the variety $\mathrm{SOS}_k(f)$ is equal to the subvariety $\mathrm O(k)\subset \mathrm{SOS}_k(f)$. We wish to extend this result to every point $(f_1,\dots,f_k)\in \mathrm{SOS}_k(f)$. We obtain that this can be extended to a vector $(f_1,\dots,f_k)$ by means of semi-continuity.
Indeed, consider the kernel $K$ of the map
$$\OO_{\PP V}(-d)^k\xrightarrow{[f_1\ldots f_k]}\OO_{\PP V}$$ defined by the vector $(f_1,\ldots, f_k)$, where $\OO_{\PP V}$ is the sheaf defining $\PP V$ as a scheme $(\PP V,\OO_{\PP_V})$.
The minimal resolution of the kernel, when there are only Koszul syzygies, start with
$$\ldots\to\OO_{\PP V}(-2d)^{\binom{k}{2}}\to K\to 0$$
By Proposition~\ref{prop: Green}, the Betti numbers $\beta_{p,p+q}$ of the minimal resolution of $K$ correspond
to the rank of $\mathrm{Tor}_p^S(K,\underline k)_{p+q}$, this is the component of degree $p+q$ of $Tor_p^S(K,\underline k)$.  %$\cong H^p(E_\bullet\otimes K)$, where $S=\bigoplus_{d\geq0} \Sym^d \CC^n$, $\underline k$ is the residual field of the maximal ideal $\bigoplus_{d\geq1} \Sym^d \CC^n$ and $E_\bullet$ the Koszul complex of $\underline k$. 
Since we can correlate the Betti numbers with cohomology dimensions using Proposition~\ref{prop: Green}, we have by Theorem~\ref{thm: semicont} that for a local deformation of $K$, the Betti numbers satisfy semi-continuity. Moreover, since we know that for any other point $(f_1,\dots,f_k)$ will have at least the Koszul syzygies, this implies that it will have only them.
\begin{cor}
    Suppose that $k\le n$ and $f\in\mathrm{SOS}_k$ is general. Let $(f_1,\dots,f_k)$ be a vector in $(\Sym^d V)^{\times k}$ giving the decomposition as $k$ sum of squares of a polynomial $f$. Then the only syzygies of $(f_1,\dots,f_k)$ are the Koszul ones.
\end{cor}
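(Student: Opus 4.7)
The plan is to promote the pointwise Fermat computation of the preceding proposition to a generic statement via semi-continuity of Betti numbers, following the approach sketched just before the corollary. I would set $T := (\Sym^d V)^{\times k}$ and, for each tuple $\mathbf f = (f_1, \dots, f_k) \in T$, consider the syzygy module
\[
K_\mathbf f = \ker\!\bigl(S(-d)^k \xrightarrow{[f_1\,\dots\,f_k]} S\bigr),
\]
which always contains the Koszul submodule generated by the $\binom{k}{2}$ elements $f_j e_i - f_i e_j$ of degree $2d$. The claim ``only Koszul syzygies'' is equivalent to the Betti number equalities $\beta_{0,2d}(K_\mathbf f) = \binom{k}{2}$ and $\beta_{0,q}(K_\mathbf f) = 0$ for $q > 2d$; by Proposition~\ref{prop: Green}, these are the ranks of $\mathrm{Tor}^S_0(K_\mathbf f, \underline k)_q$.

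The next step is to assemble the $K_\mathbf f$'s into a flat family over a suitable open $T^\circ \subset T$. Let $\mathcal K$ be the kernel of the universal map $\OO_{T \times \PP V}(-d)^{\oplus k} \to \OO_{T \times \PP V}$ whose restriction at $\mathbf f$ is $[f_1\,\dots\,f_k]$. On the open locus $T^\circ$ where $(f_1,\dots,f_k)$ forms a regular sequence --- non-empty because $k \le n+1$ and containing the Fermat point $\mathbf f_0 = (x_0^d,\dots,x_{k-1}^d)$ --- the sheaf $\mathcal K$ is flat over $T^\circ$, and the Betti numbers $\beta_{0,q}(K_\mathbf f)$ are captured by dimensions of cohomology groups of a coherent sheaf on the fibers. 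Theorem~\ref{thm: semicont} then yields that $\mathbf f \mapsto \beta_{0,q}(K_\mathbf f)$ is upper semi-continuous on $T^\circ$. At $\mathbf f_0$, the preceding proposition shows these numbers already equal their Koszul lower bound: $\beta_{0,2d}(K_{\mathbf f_0}) = \binom{k}{2}$ and $\beta_{0,q}(K_{\mathbf f_0}) = 0$ for $q \neq 2d$. Combining upper semi-continuity with the automatic lower bound from the $\binom{k}{2}$ Koszul syzygies (independent on the Zariski open locus where they are independent at $\mathbf f_0$), equality persists on a Zariski open neighborhood $U \ni \mathbf f_0$.

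Finally, the map $\pi: T \to \mathrm{SOS}_k$, $\mathbf f \mapsto \sum f_i^2$, is dominant, so $\pi(U)$ contains a Zariski open subset of $\mathrm{SOS}_k$; every $f$ in this set has at least one decomposition in $U$. Since the $\mathrm O(k)$-action on $T$ preserves both $\pi$ and the Koszul property --- an orthogonal change of basis sends Koszul syzygies to Koszul syzygies --- the locus $U$ is $\mathrm O(k)$-invariant and the conclusion extends along each $\mathrm O(k)$-orbit inside $\mathrm{SOS}_k(f)$. The main obstacle in this program is the flatness check supporting the application of Theorem~\ref{thm: semicont}: one must delineate a locus $T^\circ$ on which $\mathcal K$ is a coherent sheaf flat over $T^\circ$ and on which the relevant $\mathrm{Tor}$ sheaves fit into a single coherent family, so that cohomology of fibers varies upper semi-continuously. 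This relies on the Fermat sequence being regular and on the openness of the regular-sequence condition in flat families over an integral base.
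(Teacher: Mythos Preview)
Your proposal is correct and follows essentially the same route as the paper: specialize to the Fermat tuple, invoke Proposition~\ref{prop: Green} to interpret Betti numbers as $\mathrm{Tor}$ groups, then use upper semi-continuity (Theorem~\ref{thm: semicont}) together with the Koszul lower bound to conclude on a Zariski open set. Your write-up is in fact more careful than the paper's sketch, since you explicitly flag the flatness hypothesis needed to apply Theorem~\ref{thm: semicont} and add the $\mathrm O(k)$-invariance step to propagate the conclusion across the whole fiber $\mathrm{SOS}_k(f)$.
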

\begin{cor}\label{cor:okcomponent}
    Suppose that $k\le n$ and $f\in\mathrm{SOS}_k$ is general. We have an isomorphism $\mathrm{SOS}_k(f) \cong \mathrm O(k)^p$, for some $p \in \ZZ_+$. Note that this does not depend on the degree of $f$. In particular $\deg \mathrm{SOS}_k(f)\ge\deg \mathrm O(k)$ which is computed in Table~\ref{tab:Ok}, in fact $\deg \mathrm{SOS}_k(f) \equiv 0 \mod \deg O(k)$.
\end{cor}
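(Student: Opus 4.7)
The plan is to combine the group action of $\mathrm{O}(k)$ on $\mathrm{SOS}_k(f)$, which was set up earlier in the section, with the smoothness/syzygy information provided by the preceding corollary in order to identify each orbit as a connected component. Concretely, I would proceed as follows.

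First I would recall the $\mathrm{O}(k)$-action: for a point $(f_1,\dots,f_k)\in\mathrm{SOS}_k(f)$ encoded by the matrix $A\in\mathcal M_{k\times N}$, any $O\in\mathrm{O}(k)$ produces another decomposition $OA$ with the same Gram matrix $A^tA$. Thus the map $A\mapsto A^tA$ partitions $\mathrm{SOS}_k(f)$ into $\mathrm{O}(k)$-orbits, one for every symmetric matrix $W$ of rank $k$ with $xWx^t=f$ and $W\in\varphi(\CC^k\otimes\CC^N)$. By Lemma~\ref{lem:Ok}, each such orbit is isomorphic to $\mathrm{O}(k)$ (the stabilizer of a maximal-rank $A_0$ is trivial, and the fiber of $\varphi$ over any admissible Gram matrix is a copy of $\mathrm{O}(k)$).

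Next I would use the preceding corollary about syzygies to control the global geometry. That corollary says that, for a general $f\in\mathrm{SOS}_k$ and any $(f_1,\dots,f_k)\in\mathrm{SOS}_k(f)$, the only syzygies are Koszul; equivalently, the tangent space $\mathrm{TSOS}_k(f)_{(f_1,\dots,f_k)}$ has dimension exactly $\binom{k}{2}=\dim\mathrm{O}(k)$. Combined with Theorem~\ref{thm:dimsoskf}, this shows that $\mathrm{SOS}_k(f)$ is smooth of pure dimension $\binom{k}{2}$ at every point, and that each $\mathrm{O}(k)$-orbit saturates the tangent space. Consequently each orbit is open in $\mathrm{SOS}_k(f)$, and since the orbits also partition a separated scheme of finite type they are simultaneously closed. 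So $\mathrm{SOS}_k(f)$ is the finite disjoint union of its $\mathrm{O}(k)$-orbits, each of which is isomorphic to $\mathrm{O}(k)$, giving $\mathrm{SOS}_k(f)\cong\mathrm{O}(k)^p$ for some $p\in\ZZ_+$. Since the local model at every point is the same copy of $\mathrm{O}(k)$ regardless of $d$, the isomorphism type of each component is independent of the degree of $f$.

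Finally, for the degree statement, since $\mathrm{SOS}_k(f)$ is a disjoint union of $p$ translates of $\mathrm{O}(k)$ inside the ambient $\prod_{i=1}^k\Sym^dV$, the degree is additive over irreducible components and each component contributes $\deg\mathrm{O}(k)$ (the embedding of each orbit into the ambient affine space has the same degree as the standard embedding of $\mathrm{O}(k)$, because the orbit map $O\mapsto OA_0$ is given by linear functions in $O$). Hence $\deg\mathrm{SOS}_k(f)=p\cdot\deg\mathrm{O}(k)\ge\deg\mathrm{O}(k)$, so in particular $\deg\mathrm{SOS}_k(f)\equiv 0\pmod{\deg\mathrm{O}(k)}$.

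The main obstacle I would expect is verifying rigorously that the orbit map $O\mapsto OA_0$ gives an embedding of $\mathrm{O}(k)$ in the ambient space with the \emph{same} degree as the standard embedding; i.e., showing that different orbits do not somehow collapse or get re-embedded via a nonlinear reparametrisation. This reduces to observing that for fixed $A_0$ of maximal rank, $O\mapsto OA_0$ is the restriction of a linear map $\mathcal M_{k\times k}\to\mathcal M_{k\times N}$, so it is a closed immersion of the same projective degree; but one should still check that the linear map is injective on $\mathrm{O}(k)$, which follows again from $A_0$ having maximal rank.
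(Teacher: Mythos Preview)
Your argument is essentially the paper's own (implicit) proof: Lemma~\ref{lem:Ok} identifies each Gram-matrix fiber with $\mathrm{O}(k)$, the Koszul-syzygy corollary forces the tangent space at every point to have dimension $\binom{k}{2}$, and hence each $\mathrm{O}(k)$-orbit is open and closed, yielding $\mathrm{SOS}_k(f)\cong\mathrm{O}(k)^p$. Your added remark that $O\mapsto OA_0$ is the restriction of an injective linear map, so each orbit has the same affine degree as the standard $\mathrm{O}(k)\subset\mathcal M_{k\times k}$, is a nice point that the paper leaves tacit.

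One small logical issue: you invoke Theorem~\ref{thm:dimsoskf} to conclude smoothness, but in the paper that theorem is \emph{deduced from} Corollary~\ref{cor:okcomponent}, so citing it here is circular. Fortunately you do not need it: the $\mathrm{O}(k)$-orbit through any point already has dimension $\binom{k}{2}$, giving the lower bound on local dimension, while the syzygy corollary bounds the tangent space above by $\binom{k}{2}$; together these yield smoothness and pure dimension directly. Simply drop the reference to Theorem~\ref{thm:dimsoskf} and the proof stands.
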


We notice that the diagram \ref{eq: diagram} can have its conclusion interpreted in a different manner. Instead of considering $W_0$ a maximal rank matrix, one may consider a fixed matrix $A_0$ defining $f$, and let $$\mathrm{SOS}_k(f)=\{B^TB+C_0\ |\ \mathrm{rank}(B^TB+C_0)=k,\ B^TB=A_0^TA_0 \text{ and } C_0\in C\}.$$ Notice that such interpretation means that adding $C_0\neq 0$ is equivalent to changing the $\mathrm O(k)$ component of $\mathrm{SOS}_k(f)$. Thus, if there exists no other matrix $C_0$ besides $0$ such that $\mathrm{rank}(A_0^TA_0+C_0)=k$, it implies that there exists only one component. 

In the next pages we explore this equivalent problem and compare the dimensions of symmetric matrices of rank $k$ and $C$. Although a proof that the only translation by $C$ preserving the rank is $0$ is not obtained, by a comparison of dimensions we get a clear indicator that we should not expect other solutions. 

Let $S_k^N$ be the variety of symmetric matrices of size $N = \binom{n+d}{d}$ of rank at most $k$. Then, for some fixed $W \in S_k^N$, consider the variety \[(S+W)^N_k = \{B \; | \; B+W \in S_k^N\}.\] Note that this is indeed a variety as it is defined by the minors of the matrix $B+W$ and moreover, for all $M \in (S+W)^N_k$, we have that $M-W \in S_k^N$. Hence, we can consider this variety a translation of $S_k^N$ by the matrix $W$. \[(S+W)^N_k = S_k^N - W.\] Recall the variety $C = \{C_0 \in S^N \; | \; x^T C_0 x = 0\}$ and note that the following statement holds: \[ \text{For a generic } W, \; (S+W)_k^N \cap C = 0 \iff \text{For a generic } f, \; \deg \mathrm{SOS}_k(f) = \deg\mathrm O(k). \] 

Firstly, note that since $W$ is symmetric of rank $k$, there exists a decomposition of the form $W = A^T A$ where $A \in \mathcal{M}_{k \times N}$. Then, since every symmetric matrix of size $N$ gives a polynomial, through a moment vector $x$, we obtain a decomposition of $x^T W x$ as a sum of $k$ squares as $x^T A^T A x$. Then, as is discussed above, we would obtain equality for Corollary~\ref{cor:okcomponent}.

From the translation argument above, we obtain the following equivalences, \[(S+W)_k^N \cap C = 0 \iff (S_k^N - W) \cap C = 0 \iff S_k^N \cap (C+W) = W.\]

The equations defining $C$ are not general. Each equation specifies that a particular coefficient in the expansion of $x^tBx$ be zero. Hence, no coefficients of a general $f$ are zero, we have that a generic $W$ is not contained in the hyperplanes defined by any of the $\binom{n+2d}{2d}$ equations defining $C$.

Let $N = \binom{n+d}{d}$ and let $S$ be the polynomial ring $\CC[x_{ij}|1 \leq i,j \leq N]$. We set $x_{ij} = x_{ji}$ and consider $X = (x_{ij})_{1\leq i,j \leq N}$ to be an $N\times N$ variable symmetric matrix. For $1 \leq k \leq N-1$, we denote by $I_k$ the ideal generated by the $k+1$ minors of $X$. It is known that $S/I_k$ is a Cohen-Macaulay normal domain with dimension \[ \dim S/I_k = \frac{(2N + 1 - k)k}{2}. \]

Then, recall that \[\Sym^2(\Sym^d V)=\Sym^{2d}V\oplus C.\] Thus, \[ \codim C = \dim  \Sym^{2d} V = \binom{n+2d}{2d}.\]
The following lemma, through a dimension count, gives further support for Conjecture~\ref{conj}.
\begin{lemma}
    Let $k \leq n$. Then, for all $n,d \geq 1$, $\dim S/I_k < \codim C$.
\end{lemma}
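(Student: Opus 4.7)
The plan is to reduce the inequality to its worst case in $k$ by a monotonicity argument, then verify the resulting binomial inequality by splitting on whether $d = 1$ or $d \geq 2$. Specifically, $\dim S/I_k = \frac{k(2N-k+1)}{2}$ is a concave quadratic in $k$ with vertex at $k = N + \tfrac{1}{2}$; since $n < N = \binom{n+d}{n}$ whenever $n, d \geq 1$, this function is strictly increasing on $[0, n]$, so its maximum over $k \leq n$ is attained at $k = n$. It thus suffices to show $\frac{n(2N - n + 1)}{2} < \binom{n+2d}{n}$, which rearranges to
\[
\binom{n+2d}{n} + \binom{n}{2} > n \binom{n+d}{n}.
\]

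For the base case $d = 1$, a direct computation gives
\[
\binom{n+2}{n} + \binom{n}{2} - n \binom{n+1}{n} = \frac{(n+1)(n+2) + n(n-1) - 2n(n+1)}{2} = 1 > 0.
\]
For $d \geq 2$, I would prove the strictly stronger inequality $\binom{n+2d}{n} > n \binom{n+d}{n}$, which evidently implies the displayed inequality. Setting $P(d) := \binom{n+2d}{n}/\binom{n+d}{n}$, two short calculations finish the argument: (i) the ratio
\[
\frac{P(d+1)}{P(d)} = \frac{(n+2d+1)(n+2d+2)}{2(2d+1)(n+d+1)}
\]
has numerator minus denominator equal to exactly $n(n+1) > 0$, so $P$ is strictly increasing in $d$; and (ii) $P(2) = \frac{(n+3)(n+4)}{12}$ satisfies $P(2) > n$, equivalently $n^2 - 5n + 12 > 0$, a quadratic in $n$ with discriminant $25 - 48 < 0$. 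Combining (i) and (ii), $P(d) \geq P(2) > n$ for all $d \geq 2$.

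The main (minor) obstacle is that the stronger inequality $\binom{n+2d}{n} > n\binom{n+d}{n}$ actually fails for $d = 1$ with $n \geq 3$ (for instance $\binom{5}{3} = 10 < 12 = 3\binom{4}{3}$), so the additive correction $\binom{n}{2}$ is essential in that regime and the case $d = 1$ must be handled by the direct identity above rather than subsumed into the argument for $d \geq 2$.
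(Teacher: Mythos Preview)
Your proof is correct and follows essentially the same strategy as the paper: reduce to $k=n$ by monotonicity, handle $d=1$ by the identical direct computation, and for $d\ge 2$ establish the stronger inequality $\binom{n+2d}{n}>n\binom{n+d}{n}$ via the same base case $n^2-5n+12>0$ together with an increment argument. The only cosmetic difference is that you phrase the passage from $d$ to $d+1$ as monotonicity of the ratio $P(d)=\binom{n+2d}{n}/\binom{n+d}{n}$ (computing numerator minus denominator of $P(d+1)/P(d)$ to be $n(n+1)$), whereas the paper writes out the same step as an explicit induction.
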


\begin{proof}
Firstly, note that $\dim S/I_k$ is maximal when $k = N = \binom{n+d}{d}$ and that the dimension decreases monotonically as $k$ decreases. However, since we restrict to $k \leq n$, it suffices to show that $\dim S/I_n < \codim C$.
Now, suppose that $d=1$. Then,
\begin{align*}
    \dim S/I_n - \codim C = & \frac{(2(n+1) + 1 - n)n}{2} - \frac{(n+2)(n+1)}{2} \\
    = & \frac{n(n+3) - (n+1)(n+2)}{2} \\
    = & -1.
\end{align*}
Next, consider $d \geq 2$. Note that for all $n \geq 1$, \[ \frac{(2\binom{n+d}{d} + 1 - n)n}{2} \leq n\binom{n+d}{d}. \] 
Hence, it suffices to prove that for all $d \geq 2$, \[ \binom{n+2d}{2d} > n\binom{n+d}{d}. \]
We proceed by induction on $d$. In the base case $d=2$ we have, 
\begin{align*}
    \binom{n+4}{4} - n\binom{n+2}{2} & = \frac{(n+4)(n+3)(n+2)(n+1)}{4!} - \frac{(n+2)(n+1)n}{2!} \\
    & = \frac{(n+1)(n+2)}{4!}(n^2 - 5n + 12).
\end{align*}
It is easy to see that the polynomial $n^2 -5n +12$ is positive for all $n$ and so the base case holds. Now, assume for some fixed $d \geq 2$ that $\binom{n+2d}{2d} > n\binom{n+d}{d}$ and consider
\begin{align*}
    \binom{n+2d+2}{2d+2} - n\binom{n+d+1}{d+1} & = \frac{(n+2d+2)(n+2d+1)}{(2d+2)(2d+1)} \binom{n+2d}{2d} - \frac{n+d+1}{d+1} n \binom{n+d}{d} \\
    & = n\binom{n+d}{d} \left( \frac{(n+2d+2)(n+2d+1)}{(2d+2)(2d+1)} \frac{\binom{n+2d}{2d}}{n\binom{n+d}{d}} -  \frac{n+d+1}{d+1} \right) \\
    & > n\binom{n+d}{d} \left(\frac{(n+2d+2)(n+2d+1)}{(2d+2)(2d+1)} -  \frac{n+d+1}{d+1} \right) \\
    & = n\binom{n+d}{d} \left(\left(1+ \frac{n}{2d+2}\right)\left(1+ \frac{n}{2d+1}\right) - \left(1+ \frac{n}{d+1}\right) \right) \\ 
    & > n\binom{n+d}{d} \left(\left(1+ \frac{n}{2d+2}\right)^2 - \left(1+ \frac{n}{d+1}\right) \right) \\ 
& > n\binom{n+d}{d} \left(\left(1+ \frac{2n}{2d+2}\right) - \left(1+ \frac{n}{d+1}\right) \right) = 0. 
\end{align*} 
Thus, by induction, $\codim C - \dim S/I_k > 0$. 
\end{proof}

 We finish this section by proving that Conjecture~\ref{conj} holds for $k=2$.
 
\sostwof*

\begin{proof}
    Let $f\in \Sym^{2d}V$ be a general polynomial such that $f=g^2+h^2=(g+ih)(g-ih)$. Since $n>2$, then $f$ general is smooth and by consequence irreducible, thus the factorization is unique (UFD), we have for any other $g',\ h'$ such that $f=g'^2+h'^2$, then $\lambda(g'+ih')=g+ih$ and $\lambda^{-1}(g'-ih')=g-ih$, or $\lambda(g'+ih')=g-ih$ and $\lambda^{-1}(g'-ih')=g+ih$.
    
    Consider the first set of conditions, then $g=g'\frac{\lambda+\lambda^{-1}}{2}+h'\frac{i(\lambda-\lambda^{-1})}{2}$ and $h=g'\frac{\lambda-\lambda^{-1}}{2i}+h'\frac{\lambda+\lambda^{-1}}{2}$. Thus $$
    \begin{bmatrix}
    g\\h
    \end{bmatrix}=
    \underbrace{\begin{bmatrix}
    \frac{\lambda+\lambda^{-1}}{2} & \frac{i(\lambda-\lambda^{-1})}{2}\\
    \frac{\lambda-\lambda^{-1}}{2i} &\frac{\lambda+\lambda^{-1}}{2}
    \end{bmatrix}}_{A}\begin{bmatrix}
    g'\\h'
    \end{bmatrix}.
    $$
    Since $\det(A)=1$ and $AA^t=I$, this corresponds to one component of $\mathrm O(2)$. Then, the last copy of $\mathrm{SO}(2)$ is obtained from the other two conditions.
\end{proof}

\section{The degree of the variety of the sum of two squares}\label{sec:two}
	Let $V$ be a complex vector space of dimension $n+1$ and let $d\geq 0$ be an integer. Let $U=\Sym^dV$ and $\pi_C$ be the projection of $\Sym^2U=\Sym^{2d}V\oplus C$ centered at $C$, that is $$
	\pi_C:\Sym^{2d}V\oplus C\rightarrow \Sym^{2d}V.
	$$
	Notice that whenever $\Sigma_k(\nu_2(U))\cap C=0$, $\pi_C|_{\Sigma_k(\nu_2(U))}$ is a {well-defined morphism}. In such a case, {assuming $\pi_C|_{\Sigma_k(\nu_2(U))}$ is an isomorphism,} this means that $$\deg(\mathrm{SOS}_k)=\deg \big(\Sigma_{k}(\nu_2(U))\big).$$
	
	\begin{theorem}\label{intersection}
	Following the previous notation we have that $$\Sigma_{1}(\nu_2(\PP U))\cap   C=\emptyset \quad \text{ and } \quad \Sigma_{2}(\nu_2(\PP U))\cap   C=\emptyset.$$
	\end{theorem}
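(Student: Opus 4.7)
The plan rests on the explicit description of $\pi_C$ given in the preliminaries: $C$ is the kernel of the multiplication map $\mu\colon\Sym^2 U\to\Sym^{2d}V$ sending $w\cdot w'$ (for $w,w'\in U=\Sym^d V$) to their polynomial product, so under the splitting $\Sym^2 U=\Sym^{2d}V\oplus C$ the projection $\pi_C$ \emph{is} $\mu$. Equivalently, $C$ consists of those quadrics on $\PP U$ that vanish on the Veronese $\nu_d(\PP V)$. Now every point of $\Sigma_1(\nu_2(\PP U))$ lifts to $Q=w^2\in\Sym^2U$ with $w\in U$, and every point of $\Sigma_2(\nu_2(\PP U))$ lifts to $Q=w_1^2+w_2^2\in\Sym^2U$ (because over $\CC$, symmetric matrices of rank $\le 2$ are exactly sums of two symmetric squares).

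First I would dispose of the rank-one case: if $Q=w^2$ represents a projective point of $\Sigma_1\cap C$, then $Q\ne 0$, hence $w\ne 0$, while $\pi_C(Q)=w^2=0$ in $\Sym^{2d}V$. Since $\CC[x_0,\ldots,x_n]$ is an integral domain, this forces $w=0$, a contradiction.

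Next, for the rank-two case, suppose $Q=w_1^2+w_2^2\in\Sym^2U$ is a nonzero lift of a point of $\Sigma_2\cap C$. Then $\pi_C(Q)=w_1^2+w_2^2=0$ in $\Sym^{2d}V$, which factors as $(w_1+iw_2)(w_1-iw_2)=0$. Because $\CC[x_0,\ldots,x_n]$ is a UFD, one factor must vanish; by symmetry, assume $w_1=iw_2$. Substituting back, this time in $\Sym^2U$, gives
\[Q=(iw_2)^2+w_2^2=-w_2^2+w_2^2=0,\]
contradicting that $Q$ represents a projective point.

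The argument is essentially a one-line use of unique factorization, so I do not expect a real obstacle; the only delicate point is to keep the two algebras straight. The identity $(iw_2)^2=-w_2^2$ happens to hold in both $\Sym^2U$ (the symmetric tensor square) and $\Sym^{2d}V$ (the polynomial ring), which is precisely what makes the substitution back into $Q$ legitimate. Note that this same argument does not extend verbatim to $\Sigma_k$ for $k\ge 3$, since one would need to factor a sum like $w_1^2+w_2^2+w_3^2$, which is not reducible in general; this explains why the theorem is stated only for $k=1,2$.
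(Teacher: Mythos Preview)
Your proof is correct and shares the paper's key observation that rank-$1$ and rank-$2$ elements of $\Sym^2U$ factor as $l^2$ and $(l_0+il_1)(l_0-il_1)$ respectively. The difference lies in how the contradiction is reached. The paper takes a geometric route: it identifies $C$ with $H^0(\mathcal I_X(2))$, the space of quadrics on $\PP U$ vanishing on $X=\nu_d(\PP V)$, via the Borel--Weil theorem and the ideal sheaf sequence, and then argues that a rank-$1$ or rank-$2$ quadric vanishing on $X$ would force the irreducible, non-degenerate Veronese to lie in a hyperplane (or a union of two hyperplanes), which is impossible. You instead use the dual algebraic picture: $C$ is the kernel of the multiplication map $\Sym^2U\to\Sym^{2d}V$, and the vanishing of $w^2$ or $(w_1+iw_2)(w_1-iw_2)$ in the integral domain $\CC[x_0,\dots,x_n]$ directly forces a linear factor to be zero, whence $Q=0$ already in $\Sym^2U$. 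Your argument is more elementary---no cohomology or Borel--Weil is needed---while the paper's version makes the geometric content (non-degeneracy and irreducibility of the Veronese) explicit. One small remark: you invoke unique factorization, but only the integral domain property is actually used.
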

	\begin{proof}
	     It is known from the {Borel}-Weil Theorem, see \cite{weyman_2003}, that whenever $X=G/P\subset \PP(V_\lambda)$, where $G$ is an algebraic group and $P\subset G$ a parabolic group, then $H^0(X,\PP(V_\lambda(k)))=V_{k\lambda}$~\cite{serre1954representations}. If we consider $X=\nu_d(\PP V)\subset \PP U$ and $\Sym^{2d}V=V_{2d}$, we have the short exact sequence $$
		0\rightarrow \mathcal I_X\rightarrow \OO_{\PP U}\rightarrow \OO_X\rightarrow 0.
		$$
		Twisting it by $\OO_{\PP U}(2)$ and taking the long exact sequence of cohomologies we obtain $$
		0\rightarrow H^0(\mathcal I_X(2))\rightarrow H^0(\OO_{\PP U}(2))\rightarrow H^0(X,\OO_X(2))\rightarrow 0.$$ 
		Notice that the last map is a surjection since $H^0(X,\OO_X(2))=V_{2d}$ that is irreducible. 
		
		We remark the follow identifications: $H^0(\mathcal I_X(2))$ is given by the quadric forms on the ideal sheaf of $X$, that is, the quadric forms that belong to $ C$. $H^0(\OO_{\PP U}(2))=\Sym^2(U)=\Sym^2(\Sym^dV)$ and $H^0(X,\OO_X(2))=\Sym^{2d} V$.
		
		Assume for the purpose of contradiction that $\Sigma_1(\nu_2(\PP U))\cap C\neq \emptyset$ and $\Sigma_2(\nu_2(\PP U))\cap  C\neq \emptyset$, this means that there exists polynomials $f,g\in C$ of respective ranks $1$ and $2$ in $\PP\Sym^2 U$ such that $f,g\in H^0(\mathcal I_X(2))$. This implies that $X$ is contained in the hyperplane determined by $f=l^2$ and in the union of hyperplanes determined by $g=l_0^2+l_1^2=(l_0+il_1)(l_0-il_1)$, where $l,l_0,l_1$ are linear forms in $\PP U$. However $X=\nu_2(\PP U)$ is not contained in any {hyperplane}, thus the intersection must be empty.
	\end{proof}
	\begin{lemma}\label{lemma: inj}
		The map $\pi_C$ is injective in $\nu_2(\PP U)$.
	\end{lemma}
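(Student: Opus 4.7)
The strategy is to unwind the definitions and reduce to the fact that the symmetric algebra $\bigoplus_q \Sym^q V$ is a unique factorisation domain.

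First, I would observe that under the splitting $\Sym^2 U = \Sym^{2d}V \oplus C$, the projection $\pi_C$ is precisely the multiplication map
\[
m\colon \Sym^2(\Sym^d V) \longrightarrow \Sym^{2d} V, \qquad u\cdot v \longmapsto uv,
\]
because $C$ is by definition the kernel of this multiplication map (it is the space of quadrics on $U$ vanishing on $\nu_d(\PP V)$, which under the isomorphism $\Sym^2 U \cong \Sym^2(\Sym^d V)$ is exactly $\ker m$). So for a point $[u^2]\in\nu_2(\PP U)$ (with $u\in U=\Sym^d V$), we have $\pi_C([u^2])=[u^2]$, where now $u^2$ is interpreted as the actual product in $\Sym^{2d}V$.

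Next, suppose $[u^2]$ and $[v^2]$ are two points of $\nu_2(\PP U)$ with $\pi_C([u^2])=\pi_C([v^2])$ in $\PP\Sym^{2d}V$. This means there exists $\lambda\in\CC^\ast$ such that $u^2=\lambda v^2$ in the polynomial ring $\Sym^{2d}V$. Picking a square root $\mu\in\CC$ with $\mu^2=\lambda$, we may factor
\[
(u-\mu v)(u+\mu v)=0
\]
inside the integral domain $\bigoplus_q \Sym^q V$. Hence $u=\pm\mu v$ as elements of $\Sym^d V$, from which it follows that $u^2=\mu^2 v^2=\lambda v^2$ as elements of $\Sym^2 U$ as well, i.e.\ $[u^2]=[v^2]$ in $\PP\Sym^2 U$. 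This shows $\pi_C$ is injective on $\nu_2(\PP U)$.

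There is no real obstacle here beyond correctly identifying $\pi_C$ with polynomial multiplication; the core of the argument is the UFD property of $\bigoplus_q \Sym^q V$, which forces a rank-one square in $\Sym^{2d}V$ to determine its factor up to sign, and this sign ambiguity disappears on passing to the projective class $[u^2]$.
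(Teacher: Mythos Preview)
Your proof is correct but follows a different line from the paper's. The paper argues purely geometrically: if $x,y\in\nu_2(\PP U)$ have the same image under $\pi_C$, then $x-\lambda y$ lies in $C$ for some scalar $\lambda$, and since $x-\lambda y$ also lies in $\Sigma_2(\nu_2(\PP U))$, this contradicts Theorem~\ref{intersection} (that $\Sigma_2(\nu_2(\PP U))\cap C=\emptyset$). You instead identify $\pi_C$ concretely with the multiplication map $\Sym^2(\Sym^d V)\to\Sym^{2d}V$ and appeal to the fact that the polynomial ring is an integral domain to force $u=\pm\mu v$.

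Your approach is more self-contained: it does not invoke Theorem~\ref{intersection}, and in fact essentially reproves the $\Sigma_1$ and $\Sigma_2$ cases of that theorem by elementary means (a square, respectively a product of two linear forms, in $\Sym^{2d}V$ determines its factors up to scalar). The paper's approach has the advantage of being uniform in the secant index and reusing the geometric framework already set up. One small remark: you invoke unique factorisation, but all you actually use is that $\bigoplus_q\Sym^q V$ is an integral domain; the factorisation $(u-\mu v)(u+\mu v)=0$ only needs zero-divisor-freeness, not full UFD.
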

	\begin{proof}
	     Let $x,\ y$ be elements both in $\nu_2(\PP U)$. The map is given by $$
	x\mapsto \overline{x,C}\cap \Sym^{2d}V.	$$
	Thus, the equation $\pi_C(x)=\pi_C(y)$ implies that $\overline{x,C}=\overline{y,C}$. This means that there exists $\lambda\in \CC$ and $c\in C$ such that $x=\lambda y+c$. Therefore, $x-\lambda y=c\in C$ which is a contradiction since $x-\lambda y\in \Sigma_{2}(\nu_2(U))$.
	\end{proof}
	
	\begin{lemma}
The projection $\pi_C:\Sym^2(\Sym^d V)\rightarrow \Sym^{2d}V$ restricted to the second secant variety of the Veronese variety $\Sigma_2(\nu_2(\Sym^dV))$ is injective. 
\end{lemma}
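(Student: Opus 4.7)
The plan is to adapt the unique factorization argument used in the proof of Theorem~\ref{thm:sos2f}. By Theorem~\ref{intersection} one has $\Sigma_2(\nu_2(\PP U))\cap C=\emptyset$, so $\pi_C$ is regular on all of $\Sigma_2(\nu_2(\PP U))$. First I would translate, exactly as at the start of the proof of Lemma~\ref{lemma: inj}, the assumption $\pi_C(x)=\pi_C(y)$ for two points $x,y\in\Sigma_2(\nu_2(\PP U))$ into the existence of $\lambda\in\CC$ with $x-\lambda y\in C$, which in turn amounts to the polynomial identity
$$u_1^2+u_2^2=\lambda(v_1^2+v_2^2)\qquad\text{in }\Sym^{2d}V,$$
where $x=u_1^2+u_2^2$ and $y=v_1^2+v_2^2$ with $u_i,v_i\in\Sym^d V$.

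The second step is to factor both sides in the unique factorization domain $\CC[x_0,\dots,x_n]$, obtaining $(u_1+iu_2)(u_1-iu_2)=\lambda(v_1+iv_2)(v_1-iv_2)$. For a general point of $\Sigma_2(\nu_2(\PP U))$ with $n\geq 2$, the degree-$d$ factors $u_1\pm iu_2$ are irreducible, since the reducible locus is a proper closed subvariety of $\Sym^d V$ as soon as there are at least three variables. Unique factorization then produces, after possibly swapping the two factors on the right-hand side, relations $u_1+iu_2=\alpha(v_1\pm iv_2)$ and $u_1-iu_2=\beta(v_1\mp iv_2)$ with $\alpha\beta=\lambda$. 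Solving the resulting $2\times 2$ linear system yields a matrix $M\in\mathrm{M}_{2\times 2}(\CC)$ with $\binom{u_1}{u_2}=M\binom{v_1}{v_2}$ and $M^tM=\lambda I_2$, so that $M/\sqrt{\lambda}\in\mathrm{O}(2)$.

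To conclude, I would observe that if $A$ and $B$ denote the $2\times N$ coefficient matrices of $(u_1,u_2)$ and $(v_1,v_2)$, then $A=MB$ and the associated Gram matrices satisfy $A^tA=B^tM^tMB=\lambda B^tB$. This identifies $x$ with $\lambda y$ as elements of $\Sym^2 U$, hence $x=y$ in $\PP\Sym^2 U$, as required.

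The hard part will be the non-generic locus where the factors $u_1\pm iu_2$ become reducible: there unique factorization allows many more matchings of factors and one would have to check that each of them still forces $x=y$ projectively. For the subsequent computation of $\deg\mathrm{SOS}_2$, however, generic injectivity of $\pi_C|_{\Sigma_2(\nu_2(\PP U))}$ is all that is needed, since it already implies that this projection is birational onto its image and hence that $\deg\mathrm{SOS}_2=\deg\Sigma_2(\nu_2(\PP U))$.
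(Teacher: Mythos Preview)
Your argument is correct at the generic level and, as you note yourself, generic injectivity is all that is needed for the subsequent degree computation. The paper's own proof also only establishes generic injectivity, since it invokes Theorem~\ref{thm:sos2f}, which is stated for generic $f$.

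The route, however, is genuinely different in packaging. The paper does not redo the unique factorization computation; instead it sets up a commutative square with the abstract secant variety $\mathrm{Ab}_2=\{(\alpha,\beta,g)\mid \alpha^2+\beta^2=g\}\subset (\Sym^dV)^2\times\Sym^2(\Sym^dV)$ and its analogue $X\subset(\Sym^dV)^2\times\Sym^{2d}V$, and then compares fibers: the fiber of $\mathrm{Ab}_2\to\Sigma_2(\nu_2(\Sym^dV))$ is $\mathrm{O}(2)$ by Lemma~\ref{lem:Ok}, the fiber of $X\to\mathrm{SOS}_2$ is $\mathrm{O}(2)$ by Theorem~\ref{thm:sos2f}, and the map $\mathrm{Ab}_2\to X$ is a bijection. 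A diagram chase then forces $\zeta\colon\Sigma_2(\nu_2(\Sym^dV))\to\mathrm{SOS}_2$ to be one-to-one. In effect, the paper quotes Theorem~\ref{thm:sos2f} as a black box, whereas you unpack its UFD content directly and push it one step further to conclude $A^tA=\lambda B^tB$ in $\Sym^2U$. Your approach is more self-contained and makes the mechanism transparent; the paper's approach is more structural and would generalise verbatim to any $k$ for which Conjecture~\ref{conj} is known. Both rest on the same core input, namely the irreducibility of $u_1\pm iu_2$ for generic $u_1,u_2\in\Sym^dV$ when $n\ge 2$.
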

\begin{proof}
    Consider the projection $$\Sym^dV\times \Sym^dV\times \Sym^2(\Sym^d V)\rightarrow \Sym^2(\Sym^dV).$$ Let $\text{Ab}_2(\nu_2(\Sym^dV))=\{(\alpha,\beta,g)|\alpha^2+\beta^2=g\}$  be the abstract Veronese variety that under the projection is mapped to $\Sigma_2(\nu_2(\Sym^dV))$. Notice that the fiber of this projection on a point $g$ is $\mathrm O(2)$ by Lemma \ref{lemma: inj}.
    
    We may consider a similar projection $$\Sym^dV\times \Sym^dV\times \Sym^{2d} V\rightarrow \Sym^{2d}V.$$ 
    We may define $X=\{(\alpha,\beta,f)|\alpha^2+\beta^2=f\}$ in the same fashion as before. Under this projection we have that $X$ is mapped to $\mathrm{SOS}_2$ and the fiber on a point $f$ is $\mathrm{SOS}_2(f)=\mathrm O(2)$ by Lemma \ref{thm:sos2f}.
    
    Notice that the map $\Sym^{2}(\Sym^{d}V)\rightarrow \Sym^{2d}V$ that corresponds to the change of coordinates $w_1=x_0^d,\dots,w_N=x_n^d$ is injective when restricted to $\Sigma_2(\nu_2(\Sym^dV))$ and so is the induced linear map from $\text{Ab}_2$ to $X$.
    
    Joining those maps into a diagram we obtain:
    $$
\begin{tikzcd}
                      &  & \text{Ab}_2 \arrow[rd, "\psi"] \arrow[lld, "\varphi"'] &                                              \\
X \arrow[rrd, "\xi"'] &  &                                                        & \Sigma_2(\nu_2(\Sym^dV)) \arrow[ld, "\zeta"] \\
                      &  & \mathrm{SOS}_2                                                  &                                             
\end{tikzcd}
    $$
From the previous remarks, $\varphi$ is an one-to-one map and the fibers of $\psi$ and $\xi$ are both equal to $\mathrm O(2)$. Since the diagram commutes, we also obtain that $\zeta$ is a one-to-one map. 
\end{proof}

	\degSOSk*

	\begin{proof}
		Since $\Sigma_k(\nu_2(\PP U))\cap C=\emptyset$ and $\pi_C|_{\Sigma_k(\nu_2(\PP U))}$ is injective for $k=1,2$, it follows $\deg(\mathrm{SOS}_j)=\deg(\Sigma_j(\nu_2(\PP U)))$. A classical result by Segre \cite{HT1984} states that for any $j\leq N$ \[
		\deg(\Sigma_j(\nu_2(\PP U)))=\prod_{i=0}^{N-1-j}\frac{\binom{N+i}{N-j-i}}{\binom{2i+1}{i}}. \qedhere \] 
	\end{proof}

	We notice that in the case of $n=2$ and $d=2$ Theorem \ref{intersection} is sharp in the sense that for the $3$-secant variety of $\nu_2(\PP U)$ the intersection with $ C$ is non-empty. Indeed, one can find by computation that the intersection of $\Sigma_1(\nu_2(\PP U))$ and $\Sigma_2(\nu_2(\PP U))$ with $ C$ are empty. Thus, $\deg(\mathrm{SOS}_1)=32$ and $\deg(\mathrm{SOS}_2)=126$ as expected. However, for $\mathrm{SOS}_3$ the intersection has codimension $3$ in $\PP^5=\PP U$. When the intersection is non-empty, the degree of $\Sigma_k(\nu_2(\PP U))$ is still an upper bound for the degree of $\mathrm{SOS}_k$.
    
%    We observe that the degree results of Theorem~\ref{thm:degSOSk} align with the results on the algebraic degree of semi-definite programming~\cite[Corollary~15]{nie2010algebraic}. Here, the authors consider the largest degree out of the minimal polynomials of the optimal solution coordinates to the convex optimisation problem dual to a certain semi-definite programming problem. Through the sum of squares and moment duality, the number of squares, $k$, becomes the rank of the optimal solution. Then, their degree result is valid providing that equation~\eqref{eq: NRS} is satisfied. 

% cases I found for k < 20, n < 1000 and d < 100
% (k,n,d)
% (1,2,2)
% (3,1,7)
% (3,2,3)
% (5,1,1)
% (5,1,10)
% (6,2,4)
% (7,2,1)
% (8,6,2)
% (9,3,1)
% (10,2,5)
% (11,4,1)
% (12,2,2)
% (13,5,1)
% (14,1,7)
% (15,2,6)
% (15,4,3)
% (15,6,1)
% (17,7,1)
% (18,1,10)
% (18,2,3)
% (19,8,1)

\section*{Acknowledgements}

This work has been supported by European Union's Horizon 2020 research and innovation programme under the Marie Sk{\l}odowska-Curie Actions, grant agreement 813211 (POEMA).

\bibliographystyle{alpha}
\bibliography{bib}
\end{document}